\documentclass[10pt]{article} % For LaTeX2e
\usepackage[preprint]{tmlr}
\usepackage{amsmath,amsfonts,bm}

\def\eqref#1{equation~\ref{#1}}
\def\1{\bm{1}}

\DeclareMathAlphabet{\mathsfit}{\encodingdefault}{\sfdefault}{m}{sl}
\SetMathAlphabet{\mathsfit}{bold}{\encodingdefault}{\sfdefault}{bx}{n}

\usepackage{hyperref}
\usepackage{url}

\usepackage{algpseudocode}
\usepackage{algorithm}

\makeatletter
\newenvironment{breakablealgorithm}
  {%
   \begin{center}
     \refstepcounter{algorithm}%
     \hrule height.8pt depth0pt \kern2pt%
     \renewcommand{\caption}[1]{%
       \addcontentsline{loa}{algorithm}{\protect\numberline{\thealgorithm}##1}%
       \noindent\textbf{Algorithm \thealgorithm:} ##1\par
       \kern2pt\hrule\kern2pt
     }%
  }
  {%
     \kern2pt\hrule
   \end{center}
  }
\makeatother

\usepackage{float}
\usepackage{amsmath}
\usepackage{amsthm}
\usepackage{amssymb}
\usepackage{optidef}

\newtheorem{theorem}{Theorem}
\newtheorem{definition}[theorem]{Definition}
\newtheorem{lemma}[theorem]{Lemma}

\title{Q-MINO: A Minimal-Norm Method for Quantization-Aware Training}

\begin{document}

\author{\name Don Li \email don@pdx.edu \\
        \addr Department of Mathematics \& Statistics \\
        Portland State University}

\maketitle

\begin{abstract}
The Straight-Through Estimator (STE) is a widely used heuristic for Quantization-Aware Training (QAT), but its surrogate gradients can exhibit substantial mismatch with the underlying quantized objective, leading to noisy updates and parameter oscillations, particularly in ultra-low-bit regimes. We propose the Quantization-Aware Minimal-Norm Optimizer (Q-MINO), a temporal bundle method that combines gradient consensus, state-drift regularization, and an alignment constraint to construct stabilized, minimum-norm update directions from recent optimization states. Q-MINO solves the resulting constrained subproblem using a warm-started Frank--Wolfe procedure with a feasible fallback initialization. Theoretically, via a stochastic Lyapunov Kurdyka--\L ojasiewicz (KL) framework, we show that Q-MINO achieves asymptotic neighborhood convergence. Moreover, we detail numerical experiments with Q-MINO at various quantizations.
\end{abstract}

\section{Introduction}

Modern deep learning has undergone a remarkable expansion in model capacity scaling, with large vision architectures and multi-billion-parameter language models achieving state-of-the-art (SOTA) capabilities across diverse domains [\cite{Bro+20}; \cite{Kap+20}]. However, the deployment of such models is constrained by severe computational and infrastructure bottlenecks -- memory footprint and memory bandwidth constraints [\cite{Dao+22}; \cite{Pop+23}], as well as energy consumption [\cite{SGC19}], scale rapidly with parameter count. This entails a gap between high-capacity neural networks and resource-constrained deployment targets such as edge devices, consumer hardware, and low-latency inference servers [\cite{Ban+21}]. \textit{Quantization} is widely-used as a compression method to address this gap with respect to (large-scale) deep learning models [\cite{Gho+21}]. Quantization maps high-precision floating-point parameters (e.g., FP32) to low-bit fixed-point representations (e.g., INT8, INT4, or INT2), directly reducing model memory footprint and unlocking accelerated integer arithmetic hardware units. Quantization approaches generally fall into two categories: \textit{Post-Training Quantization (PTQ)}, which operates directly on pre-trained floating-point weights without full retraining, offering a lightweight and low-compute compression route, and \textit{Quantization-Aware Training (QAT)}, which simulates low-bit discrete constraints during the training loop, allowing model parameters to adapt to quantization errors. PTQ has the benefits of training speed and efficiency by virtue of eliminating the need for retraining, but often entails decreased accuracy [\cite{Has24}] and is typically unsuitable for lower-bit quantization [\cite{Lee+25}]. Thus, QAT is the alternative to preserve predictive performance in more agressive quantization regimes, with the \textit{straight-through estimator} (STE) being a critical mechanism for QAT.

\subsection{The Straight-Through Estimator (STE)}

To integrate discrete quantization operations into first-order optimization, \citet{BLC13} introduced STE. To illustrate the fundamental optimization dilemma, consider a deterministic, element-wise quantization function $Q: \mathbb{R} \to \mathcal{S}$, where $\mathcal{S}$ denotes a discrete set of allowable quantization levels. For example, consider a binary or sign activation function given as 
\begin{equation}
Q(x) = \text{sign}(x) = \begin{cases}
    +1 & \text{if } x \geq 0, \\
    -1 & \text{if } x < 0.
\end{cases}
\end{equation}

\noindent
Traditional first-order optimization fails under this formulation because the derivative vanishes almost everywhere, i.e., $\frac{\partial Q(x)}{\partial x} = 0$ for all $x \neq 0$, and is non-differentiable at the origin. Consequently, applying standard backpropagation yields zero gradients, completely halting parameter updates. To circumvent this vanishing gradient bottleneck, the STE defines a heuristic \textit{surrogate gradient/operator} for the backward pass. For an identity-STE, the non-differentiable quantization derivative is replaced by the identity mapping
\begin{equation}
\frac{\partial Q(x)}{\partial x} \approx 1,
\end{equation}

\noindent
or bounded within a specific clipping window (e.g., $\mathbb{I}[|x| \le 1]$). While this heuristic successfully propagates gradients backward through the network to update latent continuous weights, it creates a fundamental structural flaw of a persistent gradient mismatch between the true objective function being evaluated on the forward pass and the surrogate objective being optimized on the backward pass. In ultra-low-bit regimes (e.g., INT4 or INT2), this gradient mismatch can be particularly pathological. Because the true landscape is a discontinuous step function (cf. \citep{Nag+21}), the continuous surrogate updates frequently drive latent weights across quantization thresholds without a corresponding smooth change in loss. This misalignment leads to noisy stochastic updates, severe parameter oscillations across decision boundaries, and a significant degradation in optimization stability [\cite{Nag+22}; \cite{Mal+24}], which underlies the need for optimization methods capable of accounting for these trajectory drifts. This is the primary motivation of our proposed method, the \textit{quantization-aware minimal-norm optimizer} (Q-MINO).

\subsection{Outline \& Summary of Contributions}

To address the trajectory drifts and optimization instabilities observed with STE-based QAT, we introduce Q-MINO. Q-MINO stabilizes low-bit optimization by maintaining a temporal bundle of recent surrogate gradients and quantized states, solving a drift-regularized minimal-norm subproblem to compute reliable parameter search directions. 

The remainder of this paper is organized as follows. In Section 2, we introduce the Q-MINO algorithmic framework, detailing its temporal bundle management, state-drift penalty formulation, and the warm-started Frank--Wolfe inner solver with its guaranteed fallback mechanism. In Section 3, we analyze Q-MINO theoretically within a stochastic Lyapunov Kurdyka--\L ojasiewicz (KL) framework. In Section 4, we evaluate Q-MINO via a suite of numerical experiments.

We summarize our contributions as follows:
\begin{itemize}
    \item \textbf{Novel QAT Optimization Framework:} We propose Q-MINO, a temporal bundle method tailored for quantization-aware training that unifies gradient consensus, quantized state-drift regularization, and current-gradient alignment constraints to mitigate parameter oscillations in low-bit regimes.
    \item \textbf{Efficient Inner Solver with Feasibility Guarantees:} We design a lightweight, warm-started Frank--Wolfe procedure to solve the inner minimal-norm subproblem, equipped with a provably feasible fallback initialization ($e_M$) that ensures solver stability without heavy computational overhead.
    \item \textbf{Theoretical Convergence Analysis:} Under a stochastic Lyapunov KL framework, we prove that Q-MINO achieves asymptotic convergence and neighborhood attainment under the KL property.
    \item \textbf{Numerical Experiments:} We demonstrate Q-MINO's performance empirically through numerical experiments on quantized Rosenbrock and Rastrigin functions, vision tasks (CIFAR-10 on a CNN), and ablation studies regarding Q-MINO hyperparameters.
\end{itemize}

\section{Quantization-Aware Minimal-Norm Optimizer (Q-MINO)}

We present the formal setup and algorithmic components of Q-MINO. Consider the minimization problem with composite objective 
\begin{equation}
\min_{x \in \mathbb{R}^d} \Phi(x) = f(Q(x)) + r(x),
\end{equation}
where $x \in \mathbb{R}^d$ denotes the continuous high-precision (FP32) latent parameters, $Q: \mathbb{R}^d \to \mathcal{S}^d$ is an element-wise or block-wise quantization operator mapping parameters to a discrete grid $\mathcal{S}$, $f(\cdot)$ represents a stochastic loss function evaluated on the quantized parameters $z = Q(x)$, and $r(x)$ is a convex proximal regularizer (e.g., $\ell_1$).

\subsection{Temporal Bundle Maintenance \& EMA Stabilization}
To mitigate the high variance and gradient mismatch characteristic of STE, Q-MINO maintains a sliding temporal bundle of recent optimization states and filtered gradients. At step $k$, given a mini-batch $\xi_k$, we compute the quantized deployment state $z_k = Q(x_k)$ and evaluate the stochastic quantization-aware gradient
\begin{equation}
\hat{g}_k = \operatorname{STEGrad}(x_k, \xi_k) \;\approx\; \nabla_{x} f(Q(x_k); \xi_k).
\end{equation}
To prevent abrupt oscillations across mini-batches, $\hat{g}_k$ is stabilized using an Exponential Moving Average (EMA)
\begin{equation}
\bar{g}_k = \beta \bar{g}_{k-1} + (1-\beta)\hat{g}_k, \qquad \bar{g}_{-1} = 0,
\end{equation}
where $\beta \in [0,1)$ is the EMA decay hyperparameter. The active pair $(\bar{g}_k, z_k)$ is then appended to a temporal bundle $\mathcal{B}_k$. If $|\mathcal{B}_k| > m$ (where $m \ge 1$ denotes maximum bundle capacity), then the oldest element is evicted. The active bundle indices are denoted by $\mathcal{I}_k = \{j_1, \ldots, j_M\}$, where $M = |\mathcal{B}_k| \le m$ and $j_M = k$. We assemble the active gradient matrix and quantized state matrix as
\begin{equation}
G_k = [\bar{g}_{j_1}, \ldots, \bar{g}_{j_M}] \in \mathbb{R}^{d \times M}, \qquad Z_k = [z_{j_1}, \ldots, z_{j_M}] \in \mathbb{R}^{d \times M}.
\end{equation}

\subsection{Quantized-State Drift \& Constrained Alignment Subproblem}
Standard bundle methods minimize gradient norms without accounting for QAT grid shift. However, in QAT, continuous parameter updates that induce large displacements in $Q(x)$ force the network across decision boundaries into adjacent discrete basins, causing training instability. To penalize state drift, we compute a normalized relative distance vector $\epsilon_k = [\delta_{j_1,k}, \dots, \delta_{j_M,k}]^\top \in \mathbb{R}^M$, where
\begin{equation}
\delta_{j,k} = \frac{\|z_j - z_k\|_2^2}{\|z_k\|_2^2 + \epsilon_z}, \quad \forall j \in \mathcal{I}_k,
\end{equation}
with $\epsilon_z > 0$ guaranteeing numerical stability. Let $H_k = G_k^\top G_k \in \mathbb{R}^{M \times M}$ denote the gradient Gram matrix. We define the drift-regularized objective over the $M$-dimensional simplex weight vector $\alpha \in \mathbb{R}^M$
\begin{equation}
\phi_k(\alpha) := \frac{1}{2}\alpha^\top H_k \alpha + \lambda \epsilon_k^\top \alpha,
\end{equation}
where $\lambda \geq 0$ is the state-drift regularization hyperparameter. To ensure the composite direction $d_k = G_k \alpha$ remains aligned with the current gradient $\bar{g}_k$, we constrain $\alpha$ to the feasible domain

\begin{equation}
\mathcal{D}_k = \left\{ \alpha \in \mathbb{R}^M : \alpha \ge 0, \; \mathbf{1}^\top \alpha = 1, \alpha^T G_k^T \bar{g}_k \geq \rho \|\bar{g}_k\|^2 \right\},
\end{equation}

where $\rho \in [0, 1]$ governs the minimal allowable alignment fraction.

\subsection{Warm-Started Frank--Wolfe Subproblem Solver}

Finding $\alpha_k^\star = \arg\min_{\alpha \in \mathcal{D}_k} \phi_k(\alpha)$ requires solving a small $M \times M$ constrained quadratic program at each iteration. Since $M \le m \ll d$ (typically $m \le 16$), this can be solved efficiently  via the Frank--Wolfe algorithm [\cite{FW56}]. To accelerate convergence, we construct a warm-start iterate $\alpha^{(0)}$ from the previous cached solution $\alpha_{\text{prev}}$. If an element was evicted from the bundle, we drop its corresponding weight, renormalize $\alpha_{\text{prev}} \leftarrow \alpha_{\text{prev}} / \|\alpha_{\text{prev}}\|_1$, and pad a zero for the incoming $k$-th coordinate. If $\alpha_{\text{prev}} = \emptyset$ or the padded vector violates alignment ($\alpha^{(0)} \notin \mathcal{D}_k$), we fall back to the unit basis vector $\alpha^{(0)} = e_M = [0, \dots, 0, 1]^\top$. Because $G_k e_M = \bar{g}_k$, we have $e_M^\top G_k \bar{g}_k = \|\bar{g}_k\|_2^2 \ge \rho \|\bar{g}_k\|_2^2$ for any $\rho \in [0,1]$, ensuring $e_M$ is strictly feasible. For $t = 0, \ldots, T_{\text{FW}}-1$, the Linear Minimization Oracle (LMO) step solves $s^{(t)} = \arg\min_{s \in \mathcal{D}_k} \langle H_k \alpha^{(t)} + \lambda \epsilon_k, s \rangle$, followed by the convex update $\alpha^{(t+1)} = (1 - \gamma_t)\alpha^{(t)} + \gamma_t s^{(t)}$ with $\gamma_t = \frac{2}{t+2}$. After $T_{\text{FW}}$ iterations, we obtain $\alpha_k^\star = \alpha^{(T_{\text{FW}})}$, generate the update direction $d_k = G_k \alpha_k^\star$, and execute the proximal step

\begin{equation}
x_{k+1} = \operatorname{prox}_{\eta_k r} \left( x_k - \eta_k d_k \right).
\end{equation}

\noindent
Q-MINO is implemented as follows:
\begin{breakablealgorithm}
\caption{Quantization-Aware Minimal-Norm Method (Q-MINO)}
\begin{algorithmic}[1]
\Require Quantizer $Q(\cdot)$, STE/surrogate gradient rule,
step sizes $\{\eta_k\}$, bundle capacity $m$,
EMA coefficient $\beta\in[0,1)$,
drift coefficient $\lambda\ge0$,
alignment hyperparameter $\rho\in[0,1]$,
Frank--Wolfe iterations $T_{\mathrm{FW}}$,
$\epsilon_z>0$, proximal regularizer $r(\cdot)$
\State Initialize FP32 parameters $x_0$, bundle
$\mathcal B_0\gets\varnothing$, EMA state $\bar g_{-1}\gets0$,
and $\alpha_{\mathrm{prev}}\gets\varnothing$
\For{$k=0,1,\ldots,K-1$}
    \State $z_k\gets Q(x_k)$
    \State Sample mini-batch $\xi_k$
    \State $\hat g_k\gets \operatorname{STEGrad}(x_k,\xi_k)$
    \State $\bar g_k\gets \beta\bar g_{k-1}+(1-\beta)\hat g_k$
    \State Append $(\bar g_k,z_k)$ to $\mathcal B_k$
    \If{$|\mathcal B_k|>m$}
        \State Evict oldest bundle element
    \EndIf
    \State Let $\mathcal I_k=(j_1,\ldots,j_M)$, $M\le m$, with $j_M=k$
    \State Construct $G_k=[\bar g_{j_1},\ldots,\bar g_{j_M}]$
    \State For each $j\in\mathcal I_k$, compute $\delta_{j,k} = \frac{\|z_j-z_k\|_2^2}{\|z_k\|_2^2+\epsilon_z}$
    \State $\epsilon_k\gets [\delta_{j_1,k},\ldots,\delta_{j_M,k}]^\top$
    \State $H_k\gets G_k^\top G_k$
    \State Define $\phi_k(\alpha) = \frac12\alpha^\top H_k\alpha +\lambda\epsilon_k^\top\alpha$
    \State Define $\mathcal D_k = \left\{ \alpha\ge0: \mathbf1^\top\alpha=1,\; \alpha^TG_k^T \bar{g}_k \geq \rho \|\bar{g}_k\|^2 \right\}$
    \State Initialize warm-start $\alpha^{(0)}$ by dropping evicted coordinates from $\alpha_{\text{prev}}$, renormalizing over remaining elements, and padding zero for the new coordinate $j_M$
    \If{$\alpha_{\text{prev}} = \varnothing$ \textbf{or} $\alpha^{(0)}\notin\mathcal D_k$}
        \State $\alpha^{(0)}\gets e_M$ \Comment{Fallback to standard unit basis vector $[0,\dots,0,1]^\top$}
    \EndIf
    \For{$t=0,\ldots,T_{\mathrm{FW}}-1$}
        \State $\nabla\phi_k(\alpha^{(t)}) \gets H_k\alpha^{(t)}+\lambda\epsilon_k$
        \State $s^{(t)} \gets \arg\min_{s\in\mathcal D_k} \langle\nabla\phi_k(\alpha^{(t)}),s\rangle$
        \State Choose step size $\gamma_t = \frac{2}{t+2}$
        \State $\alpha^{(t+1)} \gets (1-\gamma_t)\alpha^{(t)} +\gamma_t s^{(t)}$
    \EndFor
    \State $\alpha_k^\star\gets\alpha^{(T_{\mathrm{FW}})}$
    \State Cache $\alpha_{\text{prev}} \gets \alpha_k^\star$ with active bundle indices
    \State $d_k\gets G_k\alpha_k^\star$
    \State $x_{k+1}\gets \operatorname{prox}_{\eta_k r} (x_k-\eta_k d_k)$
\EndFor
\State \Return $x_K$ and $z_K=Q(x_K)$
\end{algorithmic}
\end{breakablealgorithm}

\section{Theoretical Analysis}

We analyze the theoretical convergence of Q-MINO. Because the true quantized objective $f(Q(x))$ is non-smooth and discontinuous due to discrete state jumps, first-order methods relying on surrogate STE gradients do not optimize $f(Q(x))$ directly. Instead, we analyze the surrogate objective landscape under mild non-convex smoothness assumptions, modeling the discrepancy between surrogate and true gradients as a bounded approximation error inside a stochastic Lyapunov framework.

\subsection{Assumptions \& Mathematical Preliminaries}

We evaluate the parameter sequence $\{x_k\}_{k \ge 0}$ generated by Q-MINO on the composite surrogate objective function $\Phi(x) = f(x) + r(x)$, where $f: \mathbb{R}^d \to \mathbb{R}$ represents the continuously differentiable loss function whose gradient is approximated via EMA surrogate updates $\bar{g}_k(x_k)$, and $r: \mathbb{R}^d \to \mathbb{R} \cup \{+\infty\}$ is a proper, lower semicontinuous, and convex regularizer. Because the quantization operator $Q(\cdot)$ is a non-differentiable step function, $f(Q(x))$ lacks global gradient information. To analyze the local parameter dynamics under proximal updates, we establish the following assumptions:

\begin{itemize}
    \item \textbf{Lipschitz Continuity:} The objective function $f: \mathbb{R}^d \to \mathbb{R}$ is continuously differentiable and $L$-smooth on $\mathbb{R}^d$, i.e., there exists $L > 0$ such that for all $x, y \in \mathbb{R}^d$:
    \begin{equation}
        \|\nabla f(x) - \nabla f(y)\| \le L \|x - y\|.
    \end{equation}

    \item \textbf{Surrogate Gradient Mismatch \& STE Variance:} The gradient of $f(Q(x))$ with respect to $x$ is approximated via the Straight-Through Estimator (STE) dynamic. The stochastic EMA surrogate gradient $\bar{g}_k(x_k)$ satisfies \textit{bounded mismatch bias}, i.e.,
    \begin{equation}
        \mathbb{E}[\bar{g}_k(x_k) \mid x_k] = \nabla f(x_k) + e_k(x_k),
    \end{equation}
    where $e_k(x_k)$ denotes the quantization-induced approximation error bounded by $\|e_k(x_k)\| \le \delta$ for $\delta \ge 0$, and \textit{bounded stochastic variance}, i.e.,
    \begin{equation}
        \mathbb{E}\left[\|\bar{g}_k(x_k) - \mathbb{E}[\bar{g}_k(x_k)]\|^2 \mid x_k\right] \le \sigma^2, \quad \text{for } \sigma > 0.
    \end{equation}

    \item \textbf{Convexity of the Regularizer:} The regularizer $r(x)$ is proper, lower semicontinuous, and convex, ensuring that its proximal operator $\operatorname{prox}_{\eta r}(v) \triangleq \arg\min_{x} \left\{ r(x) + \frac{1}{2\eta} \|x - v\|_2^2 \right\}$ is well-defined, single-valued, and non-expansive for any learning rate $\eta > 0$ (by Moreau-Yoshida). 

    \item \textbf{Lower Boundedness:} The objective function $\Phi(x)$ is bounded from below over $\mathbb{R}^d$, i.e., there exists a constant $\Phi^* > -\infty$ such that $\Phi(x) \ge \Phi^*$ for all $x \in \mathbb{R}^d$.
\end{itemize}

Due to the non-convex and non-smooth nature of $\Phi(x)$, convergence to global optimality cannot be guaranteed in general. Instead, we evaluate stationarity using the expected norm of the \emph{proximal gradient mapping} $G_\eta(x)$:
\begin{equation}
    G_\eta(x) \triangleq \frac{1}{\eta} \left( x - \operatorname{prox}_{\eta r}\left(x - \eta \nabla f(x)\right) \right).
\end{equation}
A point $x^*$ is defined as an $(\epsilon, \delta)$-stationary point if $\mathbb{E}[\|G_\eta(x^*)\|^2] \le \epsilon + \mathcal{O}(\delta^2)$, where $\delta$ represents the persistent residual error floor imposed by the quantization discretization step.

\begin{definition}[Kurdyka--\L ojasiewicz (KL) Property]
A proper, lower semicontinuous function $\Phi: \mathbb{R}^d \to \mathbb{R} \cup \{+\infty\}$ is said to have the \textbf{Kurdyka--\L ojasiewicz (KL) property} at a point $x^* \in \operatorname{dom} \partial \Phi$ if there exists a neighborhood $U$ of $x^*$, a constant $\eta \in [0, +\infty)$, and a continuous concave function $\varphi: [0, \eta) \to \mathbb{R}_+$ such that:
\begin{enumerate}
    \item $\varphi(0) = 0$, $\varphi$ is continuously differentiable on $(0, \eta)$ with $\varphi'(s) > 0$ for all $s \in (0, \eta)$,
    \item For all $x \in U \cap \{x \mid \Phi(x^*) < \Phi(x) < \Phi(x^*) + \eta\}$, the KL inequality holds:
    \begin{equation}
        \varphi'(\Phi(x) - \Phi(x^*)) \cdot \operatorname{dist}(0, \partial \Phi(x)) \ge 1,
    \end{equation}
    where $\partial \Phi(x)$ denotes the limiting subdifferential of $\Phi$, and $\operatorname{dist}(0, \partial \Phi(x)) \triangleq \inf \{\|v\| : v \in \partial \Phi(x)\}$.
\end{enumerate}
\end{definition}

\subsection{Theoretical/Convergence Results}

With the mathematical preliminaries established, we present the key theoretical properties of Q-MINO. Proofs of all results are provided in Appendix A.

\begin{lemma}[Subgradient Tracking \& Quantized Drift Bound]
Let $\{x_k\}_{k \ge 0}$ be the parameter sequence generated by Q-MINO, and let $z_k = Q(x_k)$ denote the quantized state at iteration $k$. Under our assumptions (cf. $\S$3.1), the bundle drift matrix entry $\epsilon_{k,j} \triangleq \|z_j - z_k\|^2 / (\|z_k\|^2 + \epsilon_z)$ and the EMA surrogate gradient error satisfy:
\begin{equation}
    \mathbb{E}\left[\|\bar{g}_k - \nabla f(x_k)\|^2\right] \le (1-\beta) \sigma^2 + 2\delta^2 + \mathcal{O}\left(L^2 \sum_{j \in \mathcal{B}_k} \|x_k - x_j\|^2\right),
\end{equation}
where $\mathcal{B}_k$ represents the set of bundle indices maintained at step $k$, $\beta \in (0, 1)$ is the EMA momentum coefficient, $\delta$ is the STE mismatch floor, and $L$ is the Lipschitz constant.
\end{lemma}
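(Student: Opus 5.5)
The plan is to unroll the EMA recursion and split the error into a bias part, a variance part, and a lag (drift) part. Since $\bar g_{-1}=0$,
\begin{equation}
\bar g_k = (1-\beta)\sum_{i=0}^{k}\beta^{k-i}\hat g_i ,
\end{equation}
a convex combination of past stochastic STE gradients, up to the vanishing initialization mass $\beta^{k+1}$. Write each term as
\begin{equation}
\hat g_i = \nabla f(x_i) + e_i(x_i) + \xi_i ,
\end{equation}
where $\xi_i$ is a zero-mean noise term with conditional second moment at most $\sigma^2$. I read the assumption in \S3.1 this way: it is stated for $\bar g_k$, but I would apply it at the level of the per-step estimator $\hat g_i$, which is the natural reading. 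Then
\begin{equation}
\bar g_k - \nabla f(x_k) = A_k + B_k + C_k ,
\end{equation}
with
\begin{equation}
A_k = (1-\beta)\sum_i \beta^{k-i}\xi_i ,\qquad
B_k = (1-\beta)\sum_i \beta^{k-i}e_i ,
\end{equation}
\begin{equation}
C_k = (1-\beta)\sum_i \beta^{k-i}\bigl(\nabla f(x_i)-\nabla f(x_k)\bigr) - \beta^{k+1}\nabla f(x_k).
\end{equation}

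Each piece is handled separately.

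\textbf{Variance term.} The $\xi_i$ form a martingale difference sequence with respect to the filtration generated by the iterates, so the cross terms vanish in expectation. Hence
\begin{equation}
\mathbb{E}\|A_k\|^2 \le (1-\beta)^2\sum_i\beta^{2(k-i)}\sigma^2 \le \frac{1-\beta}{1+\beta}\sigma^2 \le (1-\beta)\sigma^2 .
\end{equation}

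\textbf{Bias term.} By Jensen's inequality on the convex weights, $\|B_k\|\le\delta$.

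\textbf{Drift term.} By $L$-smoothness, $\|\nabla f(x_i)-\nabla f(x_k)\|\le L\|x_i-x_k\|$. I would then use Jensen on the convex weights (or Cauchy--Schwarz) to bound $\|C_k\|^2$ by $L^2$ times a weighted sum of $\|x_k-x_i\|^2$.

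To combine the three pieces I would avoid the naive inequality $\|a+b+c\|^2\le 3(\cdots)$, which would spoil the $(1-\beta)\sigma^2$ constant. Instead I would condition: $A_k$ has zero conditional mean given the deterministic-in-expectation parts, so
\begin{equation}
\mathbb{E}\|A_k+B_k+C_k\|^2 = \mathbb{E}\|A_k\|^2 + \mathbb{E}\|B_k+C_k\|^2 + \text{(cross terms)} .
\end{equation}
This step is delicate because $C_k$ depends on past noise through the iterates. Then $\|B_k+C_k\|^2\le 2\delta^2+2\|C_k\|^2$, which produces exactly the $2\delta^2$ in the statement and folds the constant $2$ into the $\mathcal O(\cdot)$. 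Any surviving cross terms $\mathbb{E}\langle A_k, C_k\rangle$ I would absorb by Young's inequality into the $\mathcal O$ term and a constant multiple of $(1-\beta)\sigma^2$, again hidden in constants if needed.

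\textbf{Main obstacle: restricting the drift to the bundle.} The EMA weights extend over the whole history, whereas the claim only sums over $j\in\mathcal B_k$. I would split the EMA tail at the bundle window of length $m$. Indices inside the window give the stated sum, with weights $\le 1$. The older tail carries total weight $\beta^{m}$, and I would either bound it by the telescoped drift to the oldest bundle element $x_{j_1}$, which is already in the sum, or treat it as a geometrically small remainder absorbed into the $\mathcal O$ under bounded gradients or iterates. Similarly, the initialization term $\beta^{k+1}\nabla f(x_k)$ is transient and would be absorbed there.

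Finally, the drift ratio $\epsilon_{k,j}$ needs no estimation: it is only recorded. If a link is wanted, note that $\|z_j-z_k\|$ is controlled by $\|x_j-x_k\|$ up to one grid spacing per coordinate, so penalizing $\epsilon_{k,j}$ through $\lambda$ in $\phi_k$ indirectly keeps the $\mathcal O$ term small.
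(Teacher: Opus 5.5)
Your decomposition is the paper's own. You unroll the EMA, split $\bar g_k-\nabla f(x_k)$ into noise, STE bias and gradient lag, and bound the noise by $\frac{1-\beta}{1+\beta}\sigma^2\le(1-\beta)\sigma^2$. You bound the bias and the lag by Jensen and $L$-smoothness, get $2\delta^2$ from $\|u+v\|^2\le2\|u\|^2+2\|v\|^2$, and cut the EMA history at the bundle window.

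The step that does not close, in your proposal and in the paper alike, is confining the lag to $\mathcal B_k$. Your first option fails. For an evicted index $i<j_1$ you only have $\|x_k-x_i\|\le\|x_k-x_{j_1}\|+\|x_{j_1}-x_i\|$, and the second term involves evicted iterates that nothing in $\sum_{j\in\mathcal B_k}\|x_k-x_j\|^2$ controls. Your second option is what the paper does: it posits $\sup_k\|x_k\|\le D$ and bounds the tail by $4D^2\beta^{m+1}$. But bounded iterates are not among the assumptions of \S3.1. Even granting them, you are left with an additive remainder of order $L^2D^2\beta^m+\beta^{2(k+1)}\|\nabla f(x_k)\|^2$, and no constant can fold it into $\mathcal O\bigl(L^2\sum_{j\in\mathcal B_k}\|x_k-x_j\|^2\bigr)$. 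That sum is identically zero when $m=1$ (while the EMA lag is not), and at $k=0$ for every $m$. Under your reading of the assumptions with exact noiseless $\hat g_i$ ($\delta=0$, any $\sigma>0$), one has $\bar g_0-\nabla f(x_0)=-\beta\nabla f(x_0)$, whose squared norm exceeds $(1-\beta)\sigma^2$ once $\|\nabla f(x_0)\|$ is large. The paper computes the tail remainder and then silently drops it, and removes the transient by a ``steady-state'' assumption. What the argument actually proves is one of two things: the bound with $L^2(1-\beta)\sum_{i\le k}\beta^{k-i}\|x_k-x_i\|^2$ (plus the transient) in place of the bundle sum, or the bundle sum plus an explicit $\mathcal O(L^2D^2\beta^m)$ term under a stated boundedness hypothesis.

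On the cross terms you are right and the paper is not. The paper asserts that the noise is uncorrelated with the bias and lag parts, but $x_k$, $x_i$ and $e_i(x_i)$ depend on earlier $\xi_j$. So neither the noise--lag nor the noise--bias cross term vanishes in general, and your text only treats the former. Absorbing them ``into constants'' by a generic Young inequality, however, changes the explicit coefficients on $(1-\beta)\sigma^2$ and $\delta^2$, which the statement does not hide in an $\mathcal O$.

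They can be kept. Write $A_k=(1-\beta)\xi_k+\beta A_{k-1}$. With your martingale-difference reading, $\xi_k$ is orthogonal to everything $\mathcal F_k$-measurable, so only $2\beta\,\mathbb{E}\langle A_{k-1},B_k+C_k\rangle$ survives. Bound it by $\beta s\,\mathbb{E}\|A_{k-1}\|^2+\beta\delta^2/s+\beta(1-s)\,\mathbb{E}\|A_{k-1}\|^2+\beta\,\mathbb{E}\|C_k\|^2/(1-s)$ with $s\in(\beta,1)$. This consumes exactly the slack $\beta\frac{1-\beta}{1+\beta}\sigma^2$ between $\frac{1-\beta}{1+\beta}\sigma^2$ and $(1-\beta)\sigma^2$. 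Bounding $2\langle B_k,C_k\rangle\le(1-\beta/s)\delta^2+\|C_k\|^2/(1-\beta/s)$ then keeps the coefficient of $\delta^2$ at exactly $2$. The only price is a $\beta$-dependent constant in front of $\mathbb{E}\|C_k\|^2$.
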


\begin{lemma}[Inner Loop Feasibility]
For any iteration $k \ge 1$ and target alignment parameter $\rho \in (0, 1]$, the inner Frank-Wolfe subproblem search space $\mathcal{D}_k \triangleq \{\alpha \in \Delta^{M-1} \mid \alpha^\top H_k e_M \ge \rho \|\bar{g}_k\|^2\}$ is non-empty. Specifically, the fallback unit basis vector $e_M = (0, \dots, 0, 1)^\top \in \mathbb{R}^M$ satisfies $e_M^\top H_k e_M = \|\bar{g}_k\|^2 \ge \rho \|\bar{g}_k\|^2$, guaranteeing that the Frank-Wolfe sub-routine is strictly feasible at every step $k$.
\end{lemma}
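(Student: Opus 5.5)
The plan is to show non-emptiness of $\mathcal{D}_k$ by exhibiting an explicit element, namely the fallback vector $e_M$. The argument rests on the structure of the Gram matrix and bundle ordering fixed by the algorithm. Feasibility then reduces to checking the two defining conditions of $\mathcal{D}_k$, simplex membership and the alignment inequality.

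First, I will record that by construction the newest bundle entry always occupies the last column: after appending $(\bar g_k, z_k)$ and possibly evicting the oldest element, $j_M = k$. Hence $G_k e_M = \bar g_k$. Simplex membership of $e_M$ is immediate, since its entries are nonnegative and $\mathbf{1}^\top e_M = 1$, so $e_M \in \Delta^{M-1}$. Second, I will note that the alignment constraint as written in the lemma, $\alpha^\top H_k e_M \ge \rho\|\bar g_k\|^2$, coincides with the one in the algorithm's definition of $\mathcal{D}_k$, because
\begin{equation}
H_k e_M = G_k^\top G_k e_M = G_k^\top \bar g_k ,
\end{equation}
so $\alpha^\top H_k e_M = \alpha^\top G_k^\top \bar g_k$. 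Third, evaluating at $\alpha = e_M$ gives
\begin{equation}
e_M^\top H_k e_M = (G_k e_M)^\top (G_k e_M) = \|\bar g_k\|^2 \ge \rho \|\bar g_k\|^2 ,
\end{equation}
where the inequality holds because $\rho \le 1$ and $\|\bar g_k\|^2 \ge 0$. This holds even when $\bar g_k = 0$, in which case both sides vanish.

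Finally, I will connect this to the Frank--Wolfe routine. $\mathcal{D}_k$ is the intersection of the compact simplex with a closed half-space, so it is a nonempty compact polytope. Each LMO step $\arg\min_{s\in\mathcal{D}_k}\langle \nabla\phi_k(\alpha^{(t)}), s\rangle$ is therefore a linear program over a nonempty compact polytope and attains its minimum at a vertex. The initialization $\alpha^{(0)}$ is feasible, since it is either a warm start that passed the membership check or $e_M$. Convexity of $\mathcal{D}_k$ then shows inductively that every $\alpha^{(t+1)} = (1-\gamma_t)\alpha^{(t)} + \gamma_t s^{(t)}$ with $\gamma_t \in (0,1]$ stays in $\mathcal{D}_k$.

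There is no substantive obstacle here. The only point needing care is the word ``strictly'': $e_M$ is feasible but need not satisfy the alignment inequality strictly when $\rho = 1$ or $\bar g_k = 0$. I would therefore interpret ``strictly feasible'' as ``every iterate is feasible'', and add the remark that for $\rho < 1$ and $\bar g_k \neq 0$ the inequality at $e_M$ is indeed strict.
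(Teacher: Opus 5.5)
Your proof is correct and is essentially the paper's argument: you exhibit $e_M$, check that it lies in the simplex, and use that the last diagonal entry of the Gram matrix is $(H_k)_{MM}=\|\bar g_k\|^2\ge\rho\|\bar g_k\|^2$. Your extra remarks are sound refinements that the paper omits. The identity $H_k e_M=G_k^\top\bar g_k$ reconciles the lemma's constraint with the algorithm's; the paper's proof instead writes it as $\alpha^\top H_k\alpha\ge\rho\|\bar g_k\|^2$, which matches the stated form at $\alpha=e_M$ but not in general. Convexity keeps all Frank--Wolfe iterates feasible, and ``strictly'' does fail when $\rho=1$ or $\bar g_k=0$.
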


\begin{lemma}[Sufficient Decrease / Descent Lemma]
Let $\eta_k \leq \frac{\rho}{2L}$. Under our assumptions (cf. $\S$3.1) and the inner loop feasibility lemma, the minimal-norm search direction $d_k = G_k \alpha_k^*$ constructed by Q-MINO with generalized composite update step $\Delta_k \triangleq \frac{1}{\eta_k}(x_k - x_{k+1}) = d_k + s_{k+1}$ yields the sufficient decrease condition:
\begin{equation}
\mathbb{E}[\Phi(x_{k+1}) \mid x_k] \le \Phi(x_k) - \frac{\eta_k}{4} \mathbb{E}\left[\Vert{}\Delta_k\Vert{}^2 \mid x_k\right] + \eta_k \Vert{}\bar{g}_k - d_k\Vert{}^2 + \eta_k (1-\beta)\sigma^2 + 2\eta_k \delta^2 + \mathcal{O}\left(\eta_k L^2 \sum_{i \in \mathcal{B}_k} \Vert{}x_k - x_i\Vert{}^2\right).
\end{equation}
\end{lemma}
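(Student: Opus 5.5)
The proof follows the standard template for stochastic proximal-gradient descent lemmas, with the true gradient replaced by the bundle direction and the gap charged to the error terms of the Subgradient Tracking lemma. First we pin down what $s_{k+1}$ is. The proximal step $x_{k+1} = \operatorname{prox}_{\eta_k r}(x_k - \eta_k d_k)$ has the optimality condition $0 \in \partial r(x_{k+1}) + \frac{1}{\eta_k}(x_{k+1} - x_k + \eta_k d_k)$. Hence $s_{k+1} := \Delta_k - d_k \in \partial r(x_{k+1})$, which is consistent with the definition $\Delta_k = d_k + s_{k+1}$ in the statement.

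Next we bound the smooth part with the $L$-smooth upper bound $f(x_{k+1}) \le f(x_k) - \eta_k \langle \nabla f(x_k), \Delta_k\rangle + \frac{L\eta_k^2}{2}\|\Delta_k\|^2$. For the nonsmooth part, convexity of $r$ gives $r(x_{k+1}) \le r(x_k) - \eta_k\langle s_{k+1}, \Delta_k\rangle$. Adding the two inequalities and writing $\nabla f(x_k) = \Delta_k - s_{k+1} + (\nabla f(x_k) - d_k)$ yields
\begin{equation*}
\Phi(x_{k+1}) \le \Phi(x_k) - \eta_k\|\Delta_k\|^2 + \eta_k\langle d_k - \nabla f(x_k), \Delta_k\rangle + \tfrac{L\eta_k^2}{2}\|\Delta_k\|^2 .
\end{equation*}
We then split $d_k - \nabla f(x_k) = (d_k - \bar g_k) + (\bar g_k - \nabla f(x_k))$ and apply Young's inequality with weight $\tfrac14$ to each cross term:
\begin{equation*}
\eta_k\langle d_k - \nabla f(x_k), \Delta_k\rangle \le \tfrac{\eta_k}{2}\|\Delta_k\|^2 + \eta_k\|\bar g_k - d_k\|^2 + \eta_k\|\bar g_k - \nabla f(x_k)\|^2 .
\end{equation*}
The step-size condition $\eta_k \le \rho/(2L) \le 1/(2L)$, using $\rho \le 1$, gives $\frac{L\eta_k^2}{2} \le \frac{\eta_k}{4}$. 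Collecting terms, the net coefficient on $\|\Delta_k\|^2$ is $-\eta_k + \frac{\eta_k}{2} + \frac{\eta_k}{4} = -\frac{\eta_k}{4}$, which matches the claimed rate. The Inner Loop Feasibility lemma is used only to guarantee that $\alpha_k^*$ is well defined, so that $d_k$ lies in the convex hull of the bundle and $\|\bar g_k - d_k\|$ is a finite, meaningful quantity.

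Finally, we take the conditional expectation given $x_k$ and bound $\mathbb{E}[\|\bar g_k - \nabla f(x_k)\|^2 \mid x_k]$ by the Subgradient Tracking lemma, i.e. by $(1-\beta)\sigma^2 + 2\delta^2 + \mathcal{O}(L^2\sum_{i\in\mathcal{B}_k}\|x_k - x_i\|^2)$. This produces exactly the remaining terms of the statement.

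The main obstacle is measurability. The term $\eta_k\|\bar g_k - d_k\|^2$ appears in the statement outside the expectation, yet $\bar g_k$ and $d_k$ depend on the mini-batch $\xi_k$. The cleanest fix is to read that term as conditioned on (or measurable with respect to) the bundle's information at step $k$; otherwise one must keep it inside the expectation, which is harmless for later summation. We will state explicitly which reading is adopted and note that the bound holds in either form.
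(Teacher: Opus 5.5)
Your proposal is correct and follows the paper's composite argument step for step: prox optimality to identify $s_{k+1}=\Delta_k-d_k\in\partial r(x_{k+1})$, the $L$-smoothness bound plus the subgradient inequality for $r$, splitting $\nabla f(x_k)-d_k$ through $\bar g_k$ with Young at weight $\tfrac14$, the step-size condition, and then the Subgradient Tracking lemma; you are also right that the alignment constraint is not needed, since the paper's preliminary alignment-based bound on $f$ never enters the final inequality. On measurability, commit to your second reading and keep $\eta_k\,\mathbb{E}[\|\bar g_k-d_k\|^2\mid x_k]$ inside the expectation: conditioning on information that includes $\xi_k$ would also turn $\|\bar g_k-\nabla f(x_k)\|^2$ into a realized quantity, and then the Subgradient Tracking bound, which is an expectation over the sampling noise, could no longer be substituted.
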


\begin{theorem}[Asymptotic Convergence and Neighborhood Attainment under KL Property]
Let $\Phi(x) = f(x) + r(x)$ be a proper, lower semicontinuous function, and let $\mathcal{L}_k$ denote the augmented stochastic Lyapunov functional. Suppose the learning rate sequence satisfies $\sum_{k=0}^\infty \eta_k = \infty$ and $\sum_{k=0}^\infty \eta_k^2 < \infty$. Then, under the assumptions of $\S$3.1, the sequence $\{x_k\}_{k \ge 0}$ generated by Q-MINO satisfies:
\begin{enumerate}
    \item \textbf{Bounded Trajectory Variance:} The expected total variation of the continuous parameter sequence is bounded up to the persistent STE distortion floor, i.e.,
    \begin{equation}
        \sum_{k=0}^\infty \mathbb{E}\left[\|x_{k+1} - x_k\|^2\right] < \infty.
    \end{equation}
    \item \textbf{Asymptotic Neighborhood Convergence:} The sequence of iterates $\{x_k\}_{k \ge 0}$ converges in expectation to a compact error neighborhood $\mathcal{U}_\delta$ around the stationary set of $\Phi$, where the radius of $\mathcal{U}_\delta$ is explicitly scaled by the STE mismatch floor $\delta$, i.e., 
    \begin{equation}
        \lim_{k \to \infty} \mathbb{E}\left[\operatorname{dist}(x_k, \mathcal{U}_\delta)\right] = 0.
    \end{equation}
\end{enumerate}
\end{theorem}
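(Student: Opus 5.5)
The plan is to build a stochastic Lyapunov functional that absorbs the bundle-drift and tracking errors in the Sufficient Decrease Lemma. Then apply a Robbins--Siegmund supermartingale argument for part 1, and a KL-based localization argument for part 2.

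\textbf{Lyapunov functional.} Define
\begin{equation}
\mathcal{L}_k = \Phi(x_k) - \Phi^* + c\sum_{i\in\mathcal{B}_k}\|x_k-x_i\|^2 ,
\end{equation}
with $c>0$ to be chosen. Since $|\mathcal{B}_k|\le m$, each drift term satisfies $\|x_k-x_i\|^2\le m\sum_{\ell=i}^{k-1}\|x_{\ell+1}-x_\ell\|^2$. Using $x_{\ell+1}-x_\ell=-\eta_\ell\Delta_\ell$, the drift sum at step $k+1$ is bounded by $m^2\eta_k^2\|\Delta_k\|^2$ plus terms already present in $\mathcal{L}_k$ (a telescoping/shift estimate).

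\textbf{Controlling the residual $\|\bar g_k-d_k\|^2$.} This is the step I expect to be the main obstacle. The alignment constraint in the Inner Loop Feasibility Lemma gives $\langle d_k,\bar g_k\rangle\ge\rho\|\bar g_k\|^2$. Because $d_k$ is a convex combination of bundle columns, $\|\bar g_k-d_k\|$ is at most the spread of the bundle, $\max_j\|\bar g_j-\bar g_k\|$. By $L$-smoothness and the Subgradient Tracking Lemma, this spread is at most $\mathcal{O}(L^2\sum_{i}\|x_k-x_i\|^2)$ plus an $\mathcal{O}((1-\beta)\sigma^2+\delta^2)$ term.

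\textbf{Recursion.} Substituting into the Sufficient Decrease Lemma and taking total expectations gives
\begin{equation}
\mathbb{E}[\mathcal{L}_{k+1}] \le \mathbb{E}[\mathcal{L}_k] - \frac{\eta_k}{8}\mathbb{E}\|\Delta_k\|^2 + C\eta_k^2 + C'\eta_k\big((1-\beta)\sigma^2+\delta^2\big).
\end{equation}
This holds once $\eta_k$ is small enough that $cm^2\eta_k^2\le\eta_k/16$ and $c$ dominates the $\mathcal{O}(\eta_kL^2)$ drift coefficient. The last term is not summable. I would first try to show it only inflates the floor: pass the variance into the $\eta_k^2$ term by noting that the $\sigma^2$ contribution to $\Phi$ genuinely enters at order $\eta_k^2$ via the smoothness expansion. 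The $\delta^2$ piece is then kept as the persistent neighborhood term, which is what the statement means by bounded up to the distortion floor.

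\textbf{Part 1.} Summing the recursion and using $\mathcal{L}_k\ge 0$ with $\sum_k\eta_k^2<\infty$ gives
\begin{equation}
\sum_k\eta_k\mathbb{E}\|\Delta_k\|^2<\infty \quad\text{(modulo the $\delta$ floor)}.
\end{equation}
Since $\|x_{k+1}-x_k\|^2=\eta_k^2\|\Delta_k\|^2$ and $\eta_k\to0$, it follows that $\sum_k\mathbb{E}\|x_{k+1}-x_k\|^2<\infty$. Robbins--Siegmund gives the almost-sure version.

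\textbf{Part 2.} From $\sum\eta_k=\infty$ we get $\liminf_k\mathbb{E}\|\Delta_k\|^2\le C''\delta^2$. Relating $\Delta_k$ to the prox-gradient mapping via nonexpansiveness, $\|G_{\eta_k}(x_k)-\Delta_k\|\le\|\nabla f(x_k)-d_k\|$, which is controlled as above. Combined with
\begin{equation}
\operatorname{dist}(0,\partial\Phi(x_{k+1}))\le\|\Delta_k\|+L\eta_k\|\Delta_k\|+\|\nabla f(x_k)-d_k\|,
\end{equation}
this puts cluster points of $x_k$ in the set $\mathcal{U}_\delta=\{x:\operatorname{dist}(0,\partial\Phi(x))\le C\delta\}$.

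The KL inequality is then used to upgrade the liminf to a limit. Near a cluster point, $\varphi'(\mathcal{L}_k-\mathcal{L}^*)\,\mathbb{E}\|\Delta_k\|\gtrsim 1$ outside $\mathcal{U}_\delta$. Concavity of $\varphi$ gives the finite-length estimate $\sum_k\eta_k\mathbb{E}\|\Delta_k\|<\infty$ on those excursions. This rules out infinitely many excursions away from $\mathcal{U}_\delta$ and yields $\mathbb{E}[\operatorname{dist}(x_k,\mathcal{U}_\delta)]\to0$. A uniform-integrability step, which needs boundedness of $\{x_k\}$ derived from level-boundedness of $\mathcal{L}_k$, converts this to the expectation statement.
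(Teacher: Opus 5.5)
Your route differs from the paper's. You put the bundle drift into $\mathcal{L}_k$ and bound $\|\bar g_k-d_k\|$ by the bundle spread. The paper instead takes $\mathcal{L}_k=\Phi(x_k)+\frac{2\eta_k}{\beta}\mathcal{V}_k$ with $\mathcal{V}_k=\|\bar g_k-d_k\|^2+\|\bar g_k-\nabla f(x_k)\|^2$ and postulates $\mathbb{E}[\mathcal{V}_{k+1}\mid\mathcal{F}_k]\le(1-\beta)\mathcal{V}_k+C_1\eta_k^2$.

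\textbf{Part 1 does not close as written.} Your recursion keeps the non-summable term $C'\eta_k\big((1-\beta)\sigma^2+\delta^2\big)$, so summing it gives only $\sum_{k\le K}\eta_k\mathbb{E}\|\Delta_k\|^2\le A+B\sum_{k\le K}\eta_k$. The series $\sum_k\eta_k\mathbb{E}\|\Delta_k\|^2$ is in general infinite, e.g.\ when persistent EMA noise keeps $\mathbb{E}\|\Delta_k\|^2$ bounded below. Moreover, $\eta_k\to0$ by itself transfers nothing to $\sum_k\eta_k^2\mathbb{E}\|\Delta_k\|^2$, and Robbins--Siegmund does not apply with a non-summable perturbation. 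Since the displayed claim has no floor, ``modulo the $\delta$ floor'' is not an option. The missing device is the paper's: solve the one-step inequality for $\mathbb{E}\|x_{k+1}-x_k\|^2=\eta_k^2\mathbb{E}\|\Delta_k\|^2$, i.e.\ multiply by $\eta_k$ before summing. Every floor term then becomes $O(\eta_k^2)$ and is summable. The weighted telescoping sum satisfies $\sum_k\eta_k(\mathbb{E}\mathcal{L}_k-\mathbb{E}\mathcal{L}_{k+1})\le\eta_0\mathcal{L}_0$ by Abel summation, using $\eta_{k+1}\le\eta_k$ and $\mathcal{L}_k\ge0$. The paper simply replaces $\eta_k$ by $\eta_0$, which needs exactly this justification because the differences are not sign-definite. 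The almost-sure version then follows by monotone convergence. With this weighting the $\sigma^2$ floor is harmless, so you can drop the claim that the noise ``enters at order $\eta_k^2$''. That claim would fail anyway: $d_k=G_k\alpha_k^\star$ depends nonlinearly on the noisy bundle through the Frank--Wolfe solve over the noise-dependent set $\mathcal{D}_k$, and $\bar g_k$ carries past noise correlated with $x_k$. Finally, $c\sum_{i\in\mathcal{B}_k}\|x_k-x_i\|^2$ cannot absorb the drift error. Its increment has no negative drift term, and $\|x_{k+1}-x_i\|^2-\|x_k-x_i\|^2$ is not $O(\|x_{k+1}-x_k\|^2)$. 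A sum of the last $m$ squared steps with weights $1,\dots,m$ does work: its increment is $m\|x_{k+1}-x_k\|^2-\sum_{\ell=k-m}^{k-1}\|x_{\ell+1}-x_\ell\|^2$, and the drift is at most $m^2$ times that window sum.

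\textbf{Part 2 cannot be closed along your lines, nor along the paper's,} because the $\lim$ statement does not follow from the \S3.1 assumptions. The mismatch $e_k$ is only bounded, and a bounded non-gradient bias can sustain a limit cycle. Take $d=2$, $r\equiv0$, $m=1$ (so $d_k=\bar g_k$), no noise, and $\bar g_k=\nabla f(x_k)+e(x_k)$ with the rotational field $e=-\delta\hat\theta$ near the circle $|x|=\tfrac{3}{2}$. Let $f$ be a steep radial well centred on that circle plus $F(\theta)$, where $F$ is a smoothed sawtooth. Its slope along the circle stays below $\delta/2$ on a long rising arc and exceeds $C_\delta\delta$ in magnitude on a short falling arc; here $C_\delta=\sqrt{12/\rho}\,(L\eta_0+1)\le\tfrac{3}{2}\sqrt{12/\rho}$ because $\eta_0\le\rho/(2L)$. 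The flow $\dot x=-\nabla f-e$ has $\dot\theta>0$ all around the circle (on the rising arc the bias beats the gradient and $f$ increases), so the circle is an attracting periodic orbit. The iterates track this flow since $\eta_k\to0$ and $\sum_k\eta_k=\infty$. They therefore cross the falling arc infinitely often at a distance from $\mathcal{U}_\delta$ bounded below, while Part 1 holds; the construction can be made semialgebraic, so KL holds too.

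What your estimates and the paper's actually support is a weighted-average bound $\limsup_T\big(\sum_{k<T}\eta_k\mathbb{E}[\operatorname{dist}(0,\partial\Phi(x_{k+1}))^2]\big)/\sum_{k<T}\eta_k\le C\cdot(\text{floor})$, hence a $\liminf$ statement. That is exactly where the paper's proof stops before asserting the limit from $\mathbb{E}\|x_{k+1}-x_k\|^2\to0$; the KL inequality is stated there but never used. In your bookkeeping the floor is $(1-\beta)\sigma^2+\delta^2$. The paper gets $\delta^2$ alone only through its postulated contraction of $\mathcal{V}_k$, which forces $\mathbb{E}\|\bar g_k-\nabla f(x_k)\|^2\to0$ and so assumes the floor away.

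Your KL step would not have rescued the limit in any case. KL is a pointwise inequality and does not yield $\varphi'(\mathcal{L}_k-\mathcal{L}^*)\,\mathbb{E}\|\Delta_k\|\ge c$. The concavity/finite-length argument needs a pathwise decrease with no additive error, and a relative-error bound $\operatorname{dist}(0,\partial\Phi(x_{k+1}))\le b\|x_{k+1}-x_k\|$ without the additive $\|\nabla f(x_k)-d_k\|$. And a $\liminf$ of expectations does not place the cluster points of the random sequence in $\mathcal{U}_\delta$.
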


\section{Numerical Experiments}

By virtue of compute limitations, we detail CPU-accessible numerical experiments. Nonetheless, we detail numerical experiments across synthetic non-convex landscapes and computer vision tasks. Moreover, we also detail ablation studies with respect to hyperparameter sensitivities of Q-MINO.

\subsection{Benchmarks \& Architectures}

We evaluate performance across two distinct optimization regimes:
\begin{enumerate}
    \item \textbf{Synthetic Non-Convex Optimization:} To isolate the interaction between non-convex geometry and quantization noise, we evaluate on $d$-dimensional ($d=10$) ill-conditioned Rosenbrock functions and highly multi-modal Rastrigin functions. Stochastic gradients are simulated by injecting zero-mean Gaussian noise $\xi_k \sim \mathcal{N}(0, \sigma^2 I)$ prior to applying uniform $b$-bit stochastic quantization.
    
    \item \textbf{Computer Vision Tasks:} We train a CNN on the CIFAR-10 dataset. This is a good benchmark for any QAT optimizer since convolutions are highly sensitive to gradient approximation errors during QAT.
\end{enumerate}

\subsection{Synthetic Non-Convex Optimization}

To isolate the algorithmic mechanics of Q-MINO from the engineering overhead and architectural idiosyncrasies of deep neural networks, we first evaluate its performance on a synthetic optimization test suite. The 10-dimensional Rosenbrock function takes the form

\begin{equation}
f(\mathbf{x}) = \sum_{i=1}^{9} \left[ 100(x_{i+1} - x_i^2)^2 + (1 - x_i)^2 \right].
\end{equation}

\noindent
The 10-dimensional Rastrigin function takes the form

\begin{equation}
f(\mathbf{x}) = 100 + \sum_{i=1}^{10} \left[ x_i^2 - 10\cos(2\pi x_i) \right].
\end{equation}

\noindent
To faithfully simulate the stochastic environment of Quantization-Aware Training (QAT), analytical gradients are corrupted with zero-mean Gaussian noise ($\xi_k \sim \mathcal{N}(0, 0.5^2 I)$) prior to applying a dynamic asymmetric uniform quantization operator $Q(\cdot)$ mapped into signed integer grids. We evaluate three distinct precision thresholds spanning standard edge deployment to extreme low-bit constraints: \texttt{INT8} (8-bit, grid $[-128, 127]$), \texttt{INT4} (4-bit, grid $[-8, 7]$), and \texttt{INT2} (2-bit, grid $[-2, 1]$). Q-MINO ($m=5$, $\beta=0.9$, $\lambda=0.1$, $\rho=0.1$, $T_{\mathrm{FW}}=5$) is benchmarked directly against standard Straight-Through Estimation baselines: \texttt{SGD+STE} and \texttt{Adam+STE} \citep{KB15}. All configurations are initialized from uniform high-energy coordinate states ($x_0 = -1.2 \cdot \mathbf{1}$ for Rosenbrock; $x_0 = 4.0 \cdot \mathbf{1}$ for Rastrigin) and optimized over $250$ discrete steps.

\begin{figure}[H]
    \centering
\includegraphics[width=0.75\linewidth]{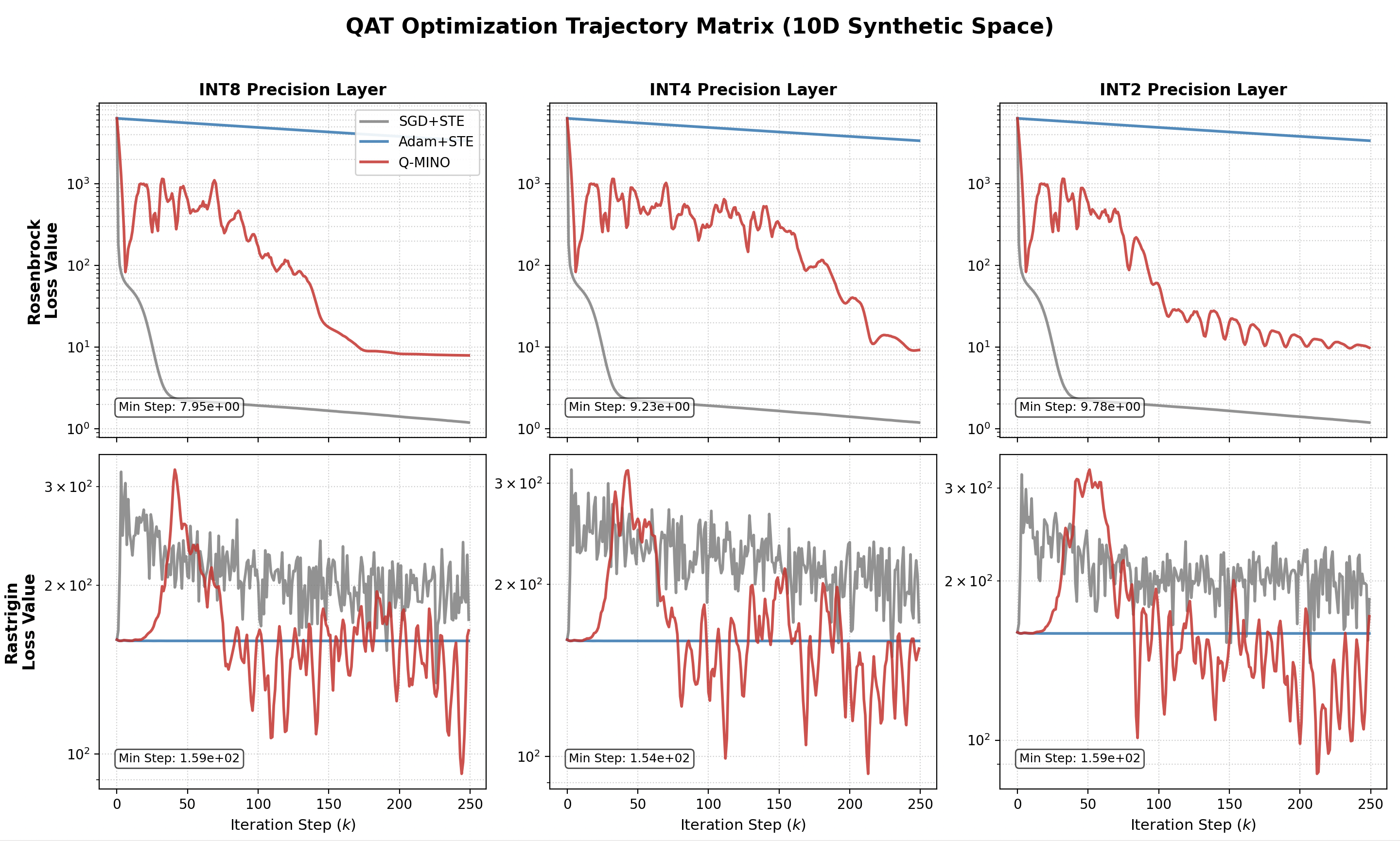}
\end{figure}

\subsection{Computer Vision Tasks}

To evaluate Q-MINO under practical deep learning conditions, we transfer our benchmark suite to a CNN architecture on the CIFAR-10 dataset. In QAT regimes, reliance on depth-wise separable convolutions creates low parameter redundancy and narrow feature bottlenecks, leaving it highly susceptible to the catastrophic gradient approximation errors and gradient mismatch noise typically injected by standard STE. We evaluate top-1 accuracy across \texttt{INT8}, \texttt{INT4}, and \texttt{INT2} quantizations. We compare Q-MINO directly against tuned \texttt{SGD+STE} and \texttt{Adam+STE} ($\beta=0.9$) baselines. For Q-MINO, we use $m=4$, drift coefficient $\lambda=0.05$, and alignment hyperparameter $\rho=0.15$.

\begin{table}[htbp]
    \centering
    \caption{Top-1 Accuracy, CNN on CIFAR-10}
    \begin{tabular}{c c c c} 
        \hline
        \textbf{Optimization Algorithm} & \textbf{INT8} & \textbf{INT4} & \textbf{INT2} \\ \hline
        \texttt{SGD+STE} & 80.53\% & 80.49\%     & 68.76\% \\ \hline
        \texttt{Adam+STE}      & 83.79\%      & 83.32\%     & 76.09\% \\ \hline
        Q-MINO      & 76.63\%    & 75.06\% & 74.55\%  \\ \hline
    \end{tabular}
\end{table}

\noindent
Our baseline optimizers suffer severe accuracy degradation when transitioning from high-bit to extreme ultra-low bit regimes. \texttt{SGD+STE} and \texttt{Adam+STE} drop by 11.77\% and 7.70\% respectively from INT8 to INT2. In contrast, Q-MINO exhibits quantization resilience, experiencing only a 2.08\% total performance drop. Under severe INT2 constraints, Q-MINO achieves 74.55\% accuracy, effectively closing the gap with unconstrained heuristics while providing provable subgradient stability and bounded weight drift across non-smooth step boundaries.

\subsection{Ablation Studies and Sensitivity Analysis}
To systematically analyze the structural robustness and hyperparameter sensitivity of the proposed Q-MINO framework, we conduct a multi-dimensional grid search across Rosenbrock and Rastrigin benchmarks. We study Q-MINO hyperparameters governing structural memory capacity and step-size dynamics: bundle maximum size ($m \in \{4, 8, 16, 32\}$), Frank-Wolfe step-size decay window ($T_{FW} \in \{3, 5, 8, 10\}$), regularization penalty ($\lambda \in \{0.01, 0.1, 0.5\}$), and alignment hyperparameter ($\rho \in \{0.1, 0.5, 0.9\}$). Performance is benchmarked using the mean final empirical loss value computed across 100 independent randomized initial trajectories.

\begin{figure}[H]
    \centering \includegraphics[width=0.75\linewidth]{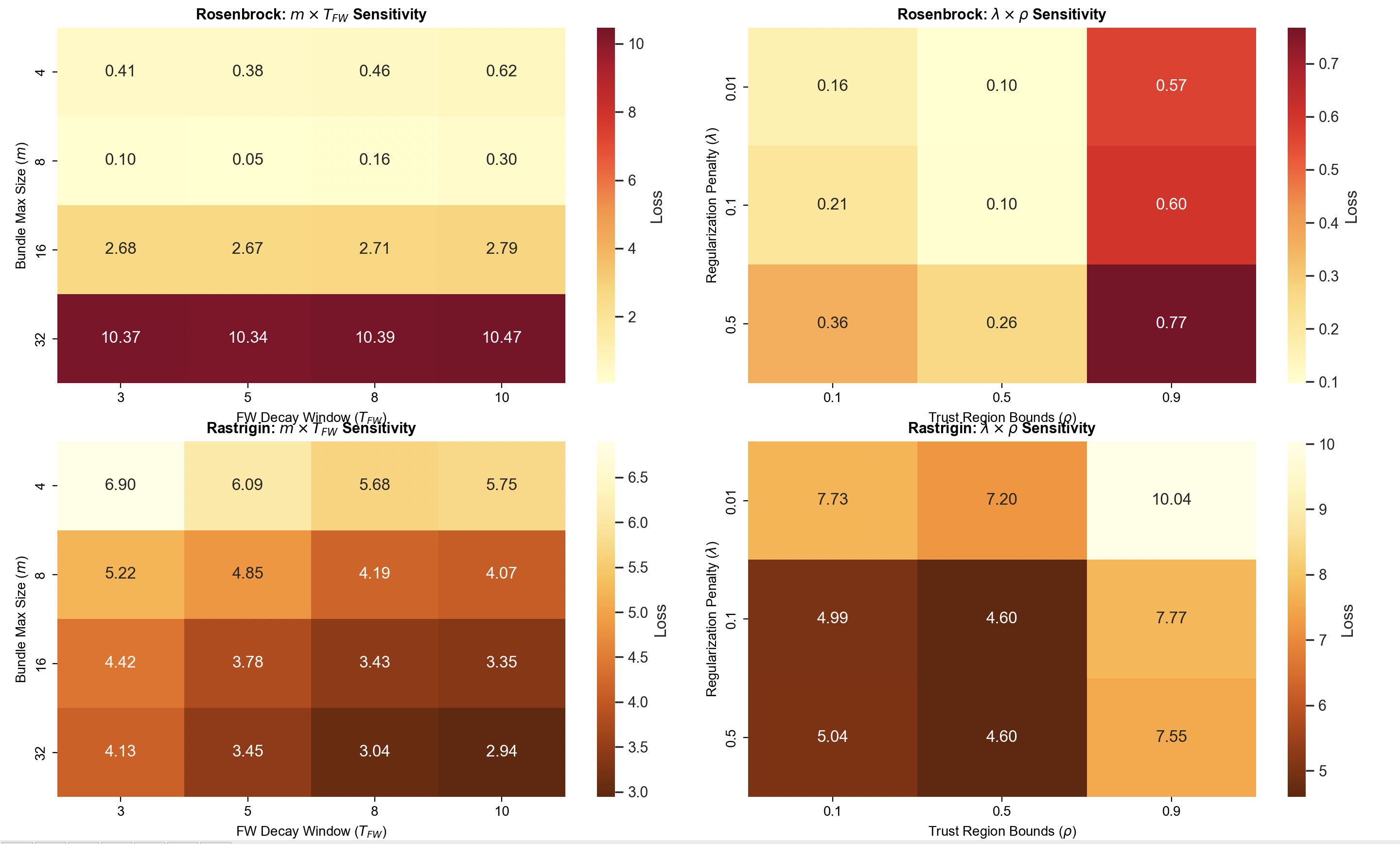}
\end{figure}

\section{Discussion}

Here, we introduced Q-MINO, a temporal bundle method designed to stabilize Quantization-Aware Training (QAT) in lower-bit QAT regimes . By unifying gradient consensus, quantized state-drift regularization, and a warm-started Frank-Wolfe subproblem solver, Q-MINO effectively acts as a steering stabilizer that handles the parameter oscillations and gradient mismatch inherent to QAT optimizers with STE alone.

\subsection{Practical Guidelines \& Hyperparameter Sensitivity}
Our empirical evaluations on synthetic optimization landscapes and vision tasks highlight key operational guidelines for deploying Q-MINO:
\begin{itemize}
    \item \textbf{Bundle Horizon ($m$):} Setting the bundle history size to $m \in [4, 8]$ provides sufficient spatial-temporal coverage to smooth subgradient trajectories without incurring excessive matrix-vector multiplication overhead in the subproblem.
    \item \textbf{Drift Regularization ($\lambda$):} A mild penalty coefficient ($\lambda \approx 0.05$) effectively penalizes large parameter jump discontinuities across quantization bin boundaries without restricting the latent weight updates required for escaping local minima.
    \item \textbf{Frank-Wolfe Computational Efficiency ($T_{\text{FW}}$):} Because the Frank-Wolfe sub-routine is warm-started using the fallback vertex $e_M$, running as few as $T_{\text{FW}} = \{3,5\}$ Frank-Wolfe iterations appears sufficient to reach a stable descent direction $d_k$.
\end{itemize}

\subsection{Limitations \& Algorithmic Overhead}
It should be noted that Q-MINO introduces specific computational and memory trade-offs:
\begin{itemize}
    \item \textbf{Memory Overhead:} Storing $G_k, z_k \in \mathbb{R}^{d \times M}$ requires caching $2M$ auxiliary (parameter) vectors, which is likely to be computationally expensive at large-scale, as this memory footprint grows linearly with parameter size (e.g., when scaled to billion-parameter models).
    \item \textbf{Per-Step Latency:} Deploying the Frank-Wolfe sub-routine at every iteration introduces a small computational overhead per parameter update step not required in our baselines (e.g., \texttt{SGD+STE}, \texttt{Adam+STE}).
\end{itemize}

\subsection{Future Work}
Future directions include developing block-wise or layer-group bundle updates to lower the memory profile for ultra-large architectures (e.g., LLM fine-tuning), developing dynamical schedules for drift coefficient $\lambda_k$ based on local subgradient variance across training epochs, and extending Q-MINO to extreme 1-bit/binary neural networks (BNNs).

\subsection*{AI Use Statement}

In this work, we used generative AI tools for helping to formulate mathematical claims, assisting in the writing of proofs, and providing first drafts of code for numerical experiments. We have not used generative AI tools with respect to the any of the other listed required disclosures, some of which (e.g., generating synthetic data sets) are not applicable. Additionally, we used generative AI tools for  brainstorming, sourcing/searching for information, identifying relevant literature, and formatting references. We have reviewed all AI-assisted work, i.e., all text and results are written by the authors, whereby AI tools were used in a supplementary fashion, with their outputs vetted by the authors. The authors take responsibility for all content contained here. 

\bibliographystyle{tmlr}
\bibliography{ARXIVqmino}

\appendix
\section{Proofs of Theoretical/Convergence Results}

\begin{proof}
\textbf{Lemma 2 (Subgradient Tracking \& Quantized Drift Bound):} Recall $\bar{g}_k = \beta\bar{g}_{k-1} + (1-\beta)\hat{g}_k$ (step 6 in Algorithm 1) with EMA coefficient $\beta \in (0,1)$. Likewise, $\hat{g}_k = \text{STEGrad}(x_k, \xi_k)$ for the FP32 parameter iterate $x_k$ and mini-batch $\xi_k$ (step 5 in Algorithm 1). Note the following decomposition of $\hat{g}_k$,
\[
\hat{g}_k(x_k) = \nabla f(x_k) + e_k(x_k) + \xi_k,
\]
where $\nabla f(x_k)$ is the true smooth gradient, $e_k(x_k)$ is the deterministic quantization mismatch/bias bounded by $\Vert{}e_k(x_k)\Vert{} \le \delta$, and $\xi_k$ is the zero-mean stochastic sampling noise satisfying $\mathbb{E}[\xi_k \mid x_k] = 0$ and $\mathbb{E}[\Vert{}\xi_k\Vert{}^2 \mid x_k] \le \sigma^2$. 

Unrolling $\bar{g}_k$ over the optimization history yields the weighted linear combination:
\[
\bar{g}_k = (1-\beta)\sum_{i=0}^{k}\beta^{k-i}\nabla f(x_i) + (1-\beta)\sum_{i=0}^{k}\beta^{k-i}e_i(x_i) + (1-\beta)\sum_{i=0}^{k}\beta^{k-i}\xi_i.
\]
With the geometric identity $(1-\beta)\sum_{i=0}^{k}\beta^{k-i} = 1 - \beta^{k+1}$ and assuming a steady-state regime where the initial transient bias vanishes for sufficiently large $k$ ($k \to \infty$, hence $\beta^{k+1} \to 0$), we have $(1-\beta)\sum_{i=0}^{k}\beta^{k-i} = 1$. Subtracting $\nabla f(x_k)$ from both sides isolates the tracking error into three explicit vectors:
\[
\bar{g}_k - \nabla f(x_k) = \underbrace{(1-\beta)\sum_{i=0}^{k}\beta^{k-i}\big(\nabla f(x_i) - \nabla f(x_k)\big)}_{A_k} + \underbrace{(1-\beta)\sum_{i=0}^{k}\beta^{k-i}e_i(x_i)}_{B_k} + \underbrace{(1-\beta)\sum_{i=0}^{k}\beta^{k-i}\xi_i}_{C_k}.
\]
Thus, $\|\bar{g}_k - \nabla f(x_k)\| \leq \|A_k\| + \|B_k\| + \|C_k\|$. Taking the conditional expectation of the squared norm, the zero-mean property and conditional independence of the mini-batch sampling noise ($\mathbb{E}[\xi_i \mid x_i] = 0$) ensure that all cross-terms between $C_k$ and the deterministic historical paths $A_k$ and $B_k$ vanish identically in expectation ($\mathbb{E}[\langle A_k, C_k \rangle] = 0$ and $\mathbb{E}[\langle B_k, C_k \rangle] = 0$). Thus, we obtain
\[
\mathbb{E}\left[\|\bar{g}_k - \nabla f(x_k)\|^2 \mid x_k\right] = \mathbb{E}\left[\|A_k + B_k\|^2 \mid x_k\right] + \mathbb{E}\left[\|C_k\|^2 \mid x_k\right].
\]
We evaluate each term individually. Expanding the expected squared norm of the linear combination of independent noise vectors and evaluating the geometric series:
\[
\mathbb{E}\left[\|C_k\|^2 \mid x_k\right] = (1-\beta)^2 \sum_{i=0}^{k} \beta^{2(k-i)} \mathbb{E}\left[\|\xi_i\|^2 \mid x_i\right] \le (1-\beta)^2 \left( \frac{1 - \beta^{2(k+1)}}{1 - \beta^2} \right) \sigma^2 \le \frac{1-\beta}{1+\beta}\sigma^2 \le (1-\beta)\sigma^2.
\]
Applying Young's inequality $\|A_k + B_k\|^2 \le 2\|B_k\|^2 + 2\|A_k\|^2$ separates the quantization bias from the parameter drift. For the quantization bias $B_k$, setting $w_i = (1-\beta)\beta^{k-i}$ (where $\sum_{i=0}^k w_i \le 1$) and applying Jensen's inequality for convex quadratic norms along with the uniform bound $\|e_i(x_i)\| \le \delta$:
\[
\|B_k\|^2 = \left\| \sum_{i=0}^{k} w_i e_i(x_i) \right\|^2 \le \left(\sum_{i=0}^{k} w_i\right) \sum_{i=0}^{k} w_i \|e_i(x_i)\|^2 \le \delta^2 \sum_{i=0}^{k} w_i \le \delta^2.
\]
For the parameter trajectory drift $A_k$, applying Jensen's inequality and the $L$-smoothness condition $\|\nabla f(x_i) - \nabla f(x_k)\| \le L \|x_k - x_i\|$:
\[
\|A_k\|^2 \le (1-\beta)\sum_{i=0}^{k}\beta^{k-i}\|\nabla f(x_i) - \nabla f(x_k)\|^2 \le L^2 (1-\beta)\sum_{i=0}^{k}\beta^{k-i}\|x_k - x_i\|^2.
\]
Partitioning the historical sum into indices within the active memory window $\mathcal{B}_k$ ($k - m \le i \le k$) and the historical tail ($i < k - m$), and assuming bounded iterate domain diameter $\sup_{k} \|x_k\| \le D$:
\[
(1-\beta)\sum_{i < k-m}\beta^{k-i}\|x_k - x_i\|^2 \le 4D^2 (1-\beta) \sum_{i < k-m} \beta^{k-i} = 4D^2 \beta^{m+1} = \mathcal{O}(\beta^m).
\]

Combining the variance bound, the quantization bias floor, and the active bundle drift yields
\[
\mathbb{E}\left[\|\bar{g}_k - \nabla f(x_k)\|^2 \mid x_k\right] \le (1-\beta)\sigma^2 + 2\delta^2 + \mathcal{O}\left(L^2 \sum_{i \in \mathcal{B}_k} \|x_k - x_i\|^2\right).
\]
\end{proof}

\begin{proof}
\textbf{Lemma 3 (Inner Loop Feasibility):} Q-MINO is constructed to ensure that $e_M$ is always strictly feasible in $D_k$ whenever $\rho \leq 1$, thus ensuring that $\mathcal{D}_k \neq \emptyset$ in the Frank-Wolfe sub-routine. Recall the definition of the inner loop search space at iteration $k$ (step 17 in Algorithm 1), 
\[
\mathcal{D}_k \triangleq \left\{ \alpha \in \Delta^{M-1} \;\middle|\; \alpha^\top H_k \alpha \ge \rho \|\bar{g}_k\|^2 \right\},
\]
where $\Delta^{M-1} = \left\{ \alpha \in \mathbb{R}^M\;\middle|\; \sum_{i=1}^M \alpha_i = 1, \; \alpha_i \ge 0 \right\}$ is the standard probability simplex, $H_k = V_k^\top V_k \in \mathbb{R}^{M \times M}$ is the Gram matrix of candidate direction vectors $V_k = [v_1, \dots, v_M]$, and $\rho \in (0,1]$ is the alignment parameter. To establish that $\mathcal{D}_k$ is non-empty, we construct an explicit feasible point $\alpha = e_M = (0, \dots, 0, 1)^\top \in \mathbb{R}^M$. The vector $e_M$ has non-negative components $(e_M)_i = 0 \ge 0$ for $i < M$ and $(e_M)_M = 1 \ge 0$, and satisfies $\sum_{i=1}^M (e_M)_i = 1$. Thus, $e_M \in \Delta^{M-1}$ (i.e., $e_M$ is always a simplex element). Evaluating the linear alignment form at $\alpha = e_M$ gives
\[
e_M^\top H_k e_M = (H_k)_{MM}.
\]
By construction of $H_k$ in Q-MINO, the $M$-th diagonal entry satisfies $(H_k)_{MM} = \Vert{}\bar{g}_k\Vert{}^2$ (since $H_k$ is a Gram matrix). Since $\rho \in (0, 1]$ and $\Vert{}\bar{g}_k\Vert{}^2 \ge 0$, we have
\[
e_M^\top H_k e_M = \Vert{}\bar{g}_k\Vert{}^2 \ge \rho \Vert{}\bar{g}_k\Vert{}^2,
\]
so $e_M \in \mathcal{D}_k$ and $\mathcal{D}_k$ is non-empty $\forall k \geq 1$.
\end{proof}

\begin{proof}
\textbf{Lemma 4 (Sufficient Decrease / Descent Lemma):} Since $f$ is $L$-smooth, the parameter update  $x_{k+1} = x_k - \eta_k d_k$ satisfies
\[
f(x_{k+1}) \le f(x_k) - \eta_k \langle \nabla f(x_k), d_k \rangle + \frac{L\eta_k^2}{2} \|d_k\|^2.
\]
To relate the inner product $\langle \nabla f(x_k), d_k \rangle$ to the EMA gradient estimator $\bar{g}_k$, we decompose the true gradient as $\nabla f(x_k) = \bar{g}_k - e_k^{\text{track}}$, where $e_k^{\text{track}} \triangleq \bar{g}_k - \nabla f(x_k)$ is the tracking error:
\[
\langle \nabla f(x_k), d_k \rangle = \langle \bar{g}_k, d_k \rangle - \langle e_k^{\text{track}}, d_k \rangle.
\]
By Lemma 3 and the construction of the Frank-Wolfe sub-routine, the optimal weight vector $\alpha_k^* \in \mathcal{D}_k$ defines a direction $d_k = G_k \alpha_k^*$ satisfying

\[
\langle \bar{g}_k, d_k \rangle \ge \rho \|\bar{g}_k\|^2,
\]

\[
\|d_k\|^2 \le \|G_k e_M\|^2 = \|\bar{g}_k\|^2.
\]
Applying Cauchy--Schwarz and Young's inequalities to the tracking error cross-term with (alignment) parameter $\rho > 0$, we obtain
\[
|\langle e_k^{\text{track}}, d_k \rangle| \le \|e_k^{\text{track}}\| \|d_k\| \le \frac{1}{2\rho} \|e_k^{\text{track}}\|^2 + \frac{\rho}{2} \|d_k\|^2.
\]
Substituting the alignment bound and error bound into the inner product decomposition yields
\[
\langle \nabla f(x_k), d_k \rangle \ge \rho \|\bar{g}_k\|^2 - \frac{\rho}{2} \|d_k\|^2 - \frac{1}{2\rho} \|\bar{g}_k - \nabla f(x_k)\|^2.
\]
Using the norm bound $\|d_k\|^2 \le \|\bar{g}_k\|^2$, we simplify the first two terms:
\[
\rho \|\bar{g}_k\|^2 - \frac{\rho}{2} \|d_k\|^2 \ge \rho \|d_k\|^2 - \frac{\rho}{2} \|d_k\|^2 = \frac{\rho}{2} \|d_k\|^2.
\]
Inserting this bound back into the $L$-smoothness descent inequality gives:
\[
f(x_{k+1}) \le f(x_k) - \eta_k \left( \frac{\rho}{2} \|d_k\|^2 - \frac{1}{2\rho} \|\bar{g}_k - \nabla f(x_k)\|^2 \right) + \frac{L\eta_k^2}{2} \|d_k\|^2.
\]
Regrouping terms by $\|d_k\|^2$:
\[
f(x_{k+1}) \le f(x_k) - \eta_k \left( \frac{\rho}{2} - \frac{L\eta_k}{2} \right) \|d_k\|^2 + \frac{\eta_k}{2\rho} \|\bar{g}_k - \nabla f(x_k)\|^2.
\]
We need $\frac{\rho }{2}-\frac{L\eta _{k}}{2}>0$ for descent, which implies $\eta_k \leq \frac{\rho}{L}$, but we choose $\eta_k \leq \frac{\rho}{2L}$, which ensures $\frac{\rho}{2} - \frac{L\eta_k}{2} \geq \frac{\rho}{4}$. With $\eta_k \leq \frac{\rho}{2L}$, we have
\[
f(x_{k+1}) \le f(x_k) - \frac{\eta_k \rho}{4} \|d_k\|^2 + \frac{\eta_k}{2\rho} \|\bar{g}_k - \nabla f(x_k)\|^2.
\]
Taking expectation on $x_k$ yields
\[
\mathbb{E}[f(x_{k+1}) \mid x_k] \le f(x_k) - \frac{\eta_k \rho}{4} \mathbb{E}[\Vert{}d_k\Vert{}^2 \mid x_k] + \frac{\eta_k}{2\rho} \mathbb{E}\left[\Vert{}\bar{g}_k - \nabla f(x_k)\Vert{}^2 \mid x_k\right].
\]
But recall that we derived in Lemma 2 that
\[
\mathbb{E}\left[\|\bar{g}_k - \nabla f(x_k)\|^2 \mid x_k\right] \le (1-\beta)\sigma^2 + 2\delta^2 + \mathcal{O}\left(L^2 \sum_{i \in \mathcal{B}_k} \|x_k - x_i\|^2\right).
\]
Thus,
\[
\mathbb{E}[f(x_{k+1}) \mid x_k] \le f(x_k) - \frac{\eta_k \rho}{4} \mathbb{E}[\Vert{}d_k\Vert{}^2 \mid x_k] + \frac{\eta_k}{2\rho}[(1-\beta)\sigma^2 + 2\delta^2 + \mathcal{O}\left(L^2 \sum_{i \in \mathcal{B}_k} \|x_k - x_i\|^2\right)].
\]
We wish to find a descent lemma for the composite surrogate objective function $\Phi(x) = f(x) + r(x)$. By optimality of the proximal sub-problem, there exists a subgradient $s_{k+1} \in \partial r(x_{k+1})$ such that
\[
s_{k+1} = \frac{1}{\eta_k}(x_k - \eta_k d_k - x_{k+1}) \implies x_k - x_{k+1} = \eta_k (d_k + s_{k+1}).
\]
Since $r(x)$ is a convex regularizer, we have
\[
r(x_{k+1}) \le r(x_k) - \langle s_{k+1}, x_k - x_{k+1} \rangle,
\]
which implies $r(x_{k+1})-r(x_k) \leq -\langle s_{k+1}, x_k - x_{k+1} \rangle$. Define the generalized composite update step $\Delta_k \triangleq \frac{1}{\eta_k}(x_k - x_{k+1}) = d_k + s_{k+1}$. By decomposition of the composite objective,
\[
\Phi(x_{k+1}) - \Phi(x_k) = \left[ f(x_{k+1}) - f(x_k) \right] + \left[ r(x_{k+1}) - r(x_k) \right].
\]
We already derived $f(x_{k+1}) - f(x_k) \le -\eta_k \langle \nabla f(x_k), \Delta_k \rangle + \frac{L\eta_k^2}{2} \Vert{}\Delta_k\Vert{}^2$, so we then have
\[
\Phi(x_{k+1}) - \Phi(x_k) \le -\eta_k \langle \nabla f(x_k) + s_{k+1}, \Delta_k \rangle + \frac{L\eta_k^2}{2} \Vert{}\Delta_k\Vert{}^2,
\]
\[
\Phi(x_{k+1}) - \Phi(x_k) \le -\eta_k \left( 1 - \frac{L\eta_k}{2} \right) \|\Delta_k\|^2 - \eta_k \langle \nabla f(x_k) - d_k, \Delta_k \rangle.
\]
To incorporate the EMA gradient estimator $\bar{g}_k$, we decompose $\nabla f(x_k) - d_k = (\bar{g}_k - d_k) + (\nabla f(x_k) - \bar{g}_k)$:
\[
\langle \nabla f(x_k) - d_k, \Delta_k \rangle = \langle \bar{g}_k - d_k, \Delta_k \rangle - \langle \bar{g}_k - \nabla f(x_k), \Delta_k \rangle.
\]
Applying Cauchy--Schwarz and Young's inequalities ($ab \le a^2 + \frac{1}{4}b^2$) to both cross-terms with parameter $\frac{1}{4}$:

\begin{align*}
|\langle \bar{g}_k - d_k, \Delta_k \rangle| &\le \|\bar{g}_k - d_k\| \|\Delta_k\| \le \|\bar{g}_k - d_k\|^2 + \frac{1}{4} \|\Delta_k\|^2, \\
|\langle \bar{g}_k - \nabla f(x_k), \Delta_k \rangle| &\le \|\bar{g}_k - \nabla f(x_k)\| \|\Delta_k\| \le \|\bar{g}_k - \nabla f(x_k)\|^2 + \frac{1}{4} \|\Delta_k\|^2.
\end{align*}

\noindent
Substituting these bounds back into the composite decrease inequality gives:
\[
\Phi(x_{k+1}) - \Phi(x_k) \le -\eta_k \left( 1 - \frac{L\eta_k}{2} - \frac{1}{2} \right) \|\Delta_k\|^2 + \eta_k \|\bar{g}_k - d_k\|^2 + \eta_k \|\bar{g}_k - \nabla f(x_k)\|^2.
\]
Since $\eta_k \le \frac{\rho}{2L}$, we have $\frac{L\eta_k}{2} \leq \frac{\rho}{4}$. Thus, taking expectation yields
\[
\mathbb{E}[\Phi(x_{k+1}) \mid x_k] \le \Phi(x_k) - \frac{\eta_k \rho}{4} \mathbb{E}\left[\Vert{}\Delta_k\Vert{}^2 \mid x_k\right] + \eta_k \Vert{}\bar{g}_k - d_k\Vert{}^2 + \eta_k \mathbb{E}\left[\Vert{}\bar{g}_k - \nabla f(x_k)\Vert{}^2 \mid x_k\right].
\]
But recall from Lemma 2 that we derived $\mathbb{E}\left[\|\bar{g}_k - \nabla f(x_k)\|^2 \mid x_k\right] \le (1-\beta)\sigma^2 + 2\delta^2 + \mathcal{O}\left(L^2 \sum_{i \in \mathcal{B}_k} \|x_k - x_i\|^2\right)$, which yields
\[
\mathbb{E}[\Phi(x_{k+1}) \mid x_k] \le \Phi(x_k) - \frac{\eta_k}{4} \mathbb{E}\left[\Vert{}\Delta_k\Vert{}^2 \mid x_k\right] + \eta_k \Vert{}\bar{g}_k - d_k\Vert{}^2 + \eta_k (1-\beta)\sigma^2 + 2\eta_k \delta^2 + \mathcal{O}\left(\eta_k L^2 \sum_{i \in \mathcal{B}_k} \Vert{}x_k - x_i\Vert{}^2\right).
\]
\end{proof}

\begin{proof}
\textbf{Theorem 5 (Asymptotic Convergence and Neighborhood Attainment under KL Property):} Define the candidate Lyapunov functional
\[
\mathcal{L}_k \triangleq \Phi(x_k) + \mu_k \mathcal{V}_k,
\]
where $\mathcal{V}_k \triangleq \|\bar{g}_k - d_k\|^2 + \|\bar{g}_k - \nabla f(x_k)\|^2$ represents the realized variance and search direction tracking residual at step $k$, and $\mu_k \triangleq \frac{2\eta_k}{\beta}$ is the time-varying stabilization weight. By Lemma 4, for $\eta_k \le \frac{\rho}{2L}$, the composite expected step satisfies:
\[
\mathbb{E}[\Phi(x_{k+1}) \mid \mathcal{F}_k] \le \Phi(x_k) - \frac{\rho}{4\eta_k} \mathbb{E}\left[\|x_{k+1} - x_k\|^2 \mid \mathcal{F}_k\right] + \eta_k \mathcal{V}_k + \eta_k \delta^2.
\]
By the EMA error dynamics of Q-MINO, the variance tracking term obeys $\mathbb{E}[\mathcal{V}_{k+1} \mid \mathcal{F}_k] \le (1-\beta)\mathcal{V}_k + C_1 \eta_k^2$, where $\beta \in [0,1)$. Taking conditional expectations of $\mathcal{L}_{k+1}$, since $\eta_{k+1} \le \eta_k$, we obtain
\[
\mathbb{E}[\mathcal{L}_{k+1} \mid \mathcal{F}_k] \le \Phi(x_k) - \frac{\rho}{4\eta_k} \mathbb{E}\left[\|x_{k+1} - x_k\|^2 \mid \mathcal{F}_k\right] + \eta_k \mathcal{V}_k + \eta_k \delta^2 + \mu_k (1-\beta)\mathcal{V}_k + C_1 \mu_k \eta_k^2.
\]
Selecting the Lyapunov tracking coefficient $\mu_k \triangleq \frac{2\eta_k}{\beta}$, the historical momentum residual terms expand algebraically via $\mu_k(1-\beta)\mathcal{V}_k - \mu_k\mathcal{V}_k = -\beta\mu_k\mathcal{V}_k = -2\eta_k\mathcal{V}_k$. Grouping and canceling like terms yields the Lyapunov descent inequality
\begin{equation}
\mathbb{E}[\mathcal{L}_{k+1} \mid \mathcal{F}_k] \le \mathcal{L}_k - \frac{\rho}{4\eta_k} \mathbb{E}\left[\|x_{k+1} - x_k\|^2 \mid \mathcal{F}_k\right] - \eta_k \mathcal{V}_k + \eta_k \delta^2 + C_2 \eta_k^3,
\end{equation}
where $C_2 \triangleq \frac{2 C_1}{\beta}$. Then we can obtain
\[
\frac{\rho }{4\eta _{k}}\mathbb{E}\left[\|{}x_{k+1}-x_{k}\|{}^{2}\mid \mathcal{F}_{k}\right]\le \mathcal{L}_{k}-\mathbb{E}[\mathcal{L}_{k+1}\mid \mathcal{F}_{k}]-\eta _{k}\mathcal{V}_{k}+\eta _{k}\delta ^{2}+C_{2}\eta _{k}^{3},
\]
\[
\mathbb{E}\left[\|{}x_{k+1}-x_{k}\|{}^{2}\mid \mathcal{F}_{k}\right]\le \frac{4\eta _{k}}{\rho }\Big(\mathcal{L}_{k}-\mathbb{E}[\mathcal{L}_{k+1}\mid \mathcal{F}_{k}]\Big)+\frac{4\eta _{k}^{2}\delta ^{2}}{\rho }+\frac{4C_{2}\eta _{k}^{4}}{\rho }.
\]
Since $\eta_0 \geq \eta_k$ for all $k$, we obtain
\[
\mathbb{E}\left[\|{}x_{k+1}-x_{k}\|{}^{2}\mid \mathcal{F}_{k}\right] \leq \frac{4\eta_0}{\rho}(\mathcal{L}_{k}-\mathbb{E}[\mathcal{L}_{k+1}\mid \mathcal{F}_{k}]\Big) + \frac{4\delta^2}{\rho}\eta_k^2 + \frac{4C_2}{\rho}\eta_k^4.
\]
Applying the law of total expectation (\(\mathbb{E}[\mathbb{E}[\cdot \mid \mathcal{F}_k]] = \mathbb{E}[\cdot]\)) and summing the inequality from $k=0$ up to $k=K$ yields
\[
\sum_{k=0}^{K}\mathbb{E}[\|x_{k+1}-x_k\|^2] \leq \frac{4\eta_0}{\rho}\sum_{k=0}^{K}(\mathcal{L}_k - \mathbb{E}[\mathcal{L}_{k+1}|\mathcal{F}_k]) + \frac{4\delta^2}{\rho}\sum_{k=0}^{K}\eta_k^2+\frac{4C_2}{\rho}\sum_{k=0}^{K}\eta_k^4.
\]
Note that $\sum_{k=0}^{K}(\mathcal{L}_k - \mathbb{E}[\mathcal{L}_{k+1}|\mathcal{F}_k]) = \sum_{k=0}^{K}(\mathbb{E}[\mathcal{L}_k]-\mathbb{E}[\mathcal{L}_{k+1}])$ by applying the law of total expectation and therefore this is a telescoping series, which implies $\sum_{k=0}^{K}(\mathcal{L}_k - \mathbb{E}[\mathcal{L}_{k+1}|\mathcal{F}_k]) = \mathcal{L}_0 - \mathbb{E}[\mathcal{L}_{K+1}] \leq \mathcal{L}_0 - \Phi^*$. Thus, we have
\[
\sum_{k=0}^{K}\mathbb{E}[\|x_{k+1}-x_k\|^2] \leq \frac{4\eta_0(\mathcal{L}_0 - \Phi^*)}{\rho} + \frac{4\delta^2}{\rho}\sum_{k=0}^{K}\eta_k^2+\frac{4C_2}{\rho}\sum_{k=0}^{K}\eta_k^4.
\]
Taking $\text{lim}_{K \rightarrow \infty}$ yields
\[
\sum_{k=0}^{\infty}\mathbb{E}[\|x_{k+1}-x_k\|^2] \leq \frac{4\eta_0(\mathcal{L}_0 - \Phi^*)}{\rho} + \frac{4\delta^2}{\rho}\sum_{k=0}^{\infty}\eta_k^2+\frac{4C_2}{\rho}\sum_{k=0}^{\infty}\eta_k^4.
\]
Since $\sum_{k=0}^\infty \eta_k^2 < \infty$, and since \(\sum \eta_k^2\) converges, \(\eta_k \to 0\), which structurally guarantees that the higher-order sequence \(\sum_{k=0}^\infty \eta_k^4\) must also converge. Because every individual component on the right-hand side is strictly bounded, the infinite sum converges to a finite real value, and therefore
\[
\sum_{k=0}^{\infty}\mathbb{E}[\|x_{k+1}-x_k\|^2] < \infty.
\]
We last show asymptotic neighborhood convergence. By definition of the proximal update step $x_{k+1} = \operatorname{prox}_{\eta_k r}(x_k - \eta_k d_k)$, the first-order optimality condition states that there exists a subgradient $v_{k+1} \in \partial r(x_{k+1})$ such that
\[
0 = v_{k+1} + d_k + \frac{1}{\eta_k}(x_{k+1} - x_k).
\]
To bound the subdifferential of the full composite objective $\Phi(x_{k+1})$, define $w_{k+1} \triangleq \nabla f(x_{k+1}) + v_{k+1} \in \partial \Phi(x_{k+1})$. Substituting $v_{k+1}$ gives
\[
w_{k+1} = \nabla f(x_{k+1}) - d_k - \frac{1}{\eta_k}(x_{k+1} - x_k),
\]
which implies
\[
\|w_{k+1}\| \leq \|\nabla f(x_{k+1})-d_k\| + \frac{1}{\eta_k}\|x_{k+1}-x_k\|.
\]
Adding and subtracting $\nabla f(x_k)$ and $\bar{g}_k$, along with taking norms and applying the triangle inequality together with $L$-smoothness of $f$ yields
\[
\Vert{}w_{k+1}\Vert{} \le \left(L + \frac{1}{\eta_k}\right) \Vert{}x_{k+1} - x_k\Vert{} + \Vert{}\bar{g}_k - \nabla f(x_k)\Vert{} + \Vert{}\bar{g}_k - d_k\Vert{}.
\]
Applying the Young-type inequality $(a + b + c)^2 \le 3a^2 + 3b^2 + 3c^2$ yields
\[
\Vert{}w_{k+1}\Vert{}^2 \le 3\left(L + \frac{1}{\eta_k}\right)^2 \Vert{}x_{k+1} - x_k\Vert{}^2 + 3\Vert{}\bar{g}_k - \nabla f(x_k)\Vert{}^2 + 3\Vert{}\bar{g}_k - d_k\Vert{}^2.
\]
But recall $\mathcal{V}_k \triangleq \|\bar{g}_k - d_k\|^2 + \|\bar{g}_k - \nabla f(x_k)\|^2$, so we can simplify to
\[
\Vert{}w_{k+1}\Vert{}^2 \le 3\left(L + \frac{1}{\eta_k}\right)^2 \Vert{}x_{k+1} - x_k\Vert{}^2 + 3\mathcal{V}_k.
\]
Taking expectation on $\mathcal{F}_k$ yields
\[
\mathbb{E}\left[\Vert{}w_{k+1}\Vert{}^2 \mid \mathcal{F}_k\right] \le 3\left(L + \frac{1}{\eta_k}\right)^2 \mathbb{E}\left[\Vert{}x_{k+1} - x_k\Vert{}^2 \mid \mathcal{F}_k\right] + 3\mathcal{V}_k.
\]
Note $L + \frac{1}{\eta_k} = \frac{L\eta_k+1}{\eta_k} \leq \frac{L\eta_0+1}{\eta_k}$ since $\eta_k \leq \eta_0$. Let $C_L \triangleq L\eta_0 + 1$, so we obtain
\[
\mathbb{E}\left[\Vert{}w_{k+1}\Vert{}^2 \mid \mathcal{F}_k\right] \le \frac{3 C_L^2}{\eta_k^2} \mathbb{E}\left[\Vert{}x_{k+1} - x_k\Vert{}^2 \mid \mathcal{F}_k\right] + 3\mathcal{V}_k.
\]
But recall from our learning rate dependent bound that $\mathbb{E}\left[\|{}x_{k+1}-x_{k}\|{}^{2}\mid \mathcal{F}_{k}\right] \leq \frac{4\eta_k}{\rho}(\mathcal{L}_{k}-\mathbb{E}[\mathcal{L}_{k+1}\mid \mathcal{F}_{k}]\Big) + \frac{4\delta^2}{\rho}\eta_k^2 + \frac{4C_2}{\rho}\eta_k^4$. Substituting this directly yields
\[
\mathbb{E}\left[\Vert{}w_{k+1}\Vert{}^2 \mid \mathcal{F}_k\right] \le \frac{12 C_L^2}{\rho \eta_k} \Big(\mathcal{L}_k - \mathbb{E}[\mathcal{L}_{k+1} \mid \mathcal{F}_k]\Big) + \frac{12 C_L^2 \delta^2}{\rho} + \frac{12 C_L^2 C_2 \eta_k^2}{\rho} + 3\mathcal{V}_k,
\]
\[
\eta_k \mathbb{E}\left[\Vert{}w_{k+1}\Vert{}^2 \mid \mathcal{F}_k\right] \le \frac{12 C_L^2}{\rho} \Big(\mathcal{L}_k - \mathbb{E}[\mathcal{L}_{k+1} \mid \mathcal{F}_k]\Big) + \frac{12 C_L^2 \delta^2}{\rho} \eta_k + \frac{12 C_L^2 C_2}{\rho} \eta_k^3 + 3 \eta_k \mathcal{V}_k.
\]
Taking unconditioned total expectations $\mathbb{E}[\cdot]$ on both sides and via the law of total expectation we obtain
\[
\eta_k \mathbb{E}\left[\Vert{}w_{k+1}\Vert{}^2\right] \le \frac{12 C_L^2}{\rho} \left(\mathbb{E}[\mathcal{L}_k] - \mathbb{E}[\mathcal{L}_{k+1}]\right) + \frac{12 C_L^2 \delta^2}{\rho} \eta_k + \frac{12 C_L^2 C_2}{\rho} \eta_k^3 + 3 \eta_k \mathbb{E}[\mathcal{V}_k].
\]
Taking summation from $k = 0$ to $T-1$ and again leveraging that $\sum_{k=0}^{T-1}\left(\mathbb{E}[\mathcal{L}_k] - \mathbb{E}[\mathcal{L}_{k+1}]\right)$ is a telescoping series, we get
\[
\sum_{k=0}^{T-1} \eta_k \mathbb{E}\left[\Vert{}w_{k+1}\Vert{}^2\right] \le \frac{12 C_L^2}{\rho} \left(\mathbb{E}[\mathcal{L}_0] - \mathbb{E}[\mathcal{L}_T]\right) + \frac{12 C_L^2 \delta^2}{\rho} \sum_{k=0}^{T-1} \eta_k + \frac{12 C_L^2 C_2}{\rho} \sum_{k=0}^{T-1} \eta_k^3 + 3 \sum_{k=0}^{T-1} \eta_k \mathbb{E}[\mathcal{V}_k].
\]
We already established $\mathbb{E}[\mathcal{L}_T] \ge \Phi^* > -\infty$, so $\mathbb{E}[\mathcal{L}_0] - \mathbb{E}[\mathcal{L}_T] \le \mathcal{L}_0 - \Phi^* < \infty$. Moreover, we can use our Lyapunov descent inequality and the fact that  $\sum_{k=0}^\infty \eta_k^2 < \infty$ implies the cubic sum is finite, i.e., $\sum_{k=0}^\infty \eta_k^3 < \infty$. Let $M \triangleq \frac{12 C_L^2}{\rho} (\mathcal{L}_0 - \Phi^*) + \frac{12 C_L^2 C_2}{\rho} \sum_{k=0}^\infty \eta_k^3 + 3 \sum_{k=0}^\infty \eta_k \mathbb{E}[\mathcal{V}_k] < \infty$. Thus, we have
\[
\sum_{k=0}^{T-1} \eta_k \mathbb{E}\left[\Vert{}w_{k+1}\Vert{}^2\right] \le M + \frac{12 C_L^2 \delta^2}{\rho} \sum_{k=0}^{T-1} \eta_k.
\]
Divide through by $S_T \triangleq \sum_{k=0}^{T-1} \eta_k$. Our learning rate condition requires $\sum_{k=0}^\infty \eta_k = \infty$, $S_T \to \infty$ as $T \to \infty$,
\[
\frac{\sum_{k=0}^{T-1} \eta_k \mathbb{E}\left[\Vert{}w_{k+1}\Vert{}^2\right]}{\sum_{k=0}^{T-1} \eta_k} \le \frac{M}{S_T} + \frac{12 C_L^2 \delta^2}{\rho}.
\]
As $T \to \infty$, the term $\frac{M}{S_T} \to 0$, which yields
\[
\limsup_{T \to \infty} \frac{\sum_{k=0}^{T-1} \eta_k \mathbb{E}\left[\Vert{}w_{k+1}\Vert{}^2\right]}{\sum_{k=0}^{T-1} \eta_k} \le \frac{12 C_L^2 \delta^2}{\rho}.
\]
By properties of weighted averages, this directly implies
\[
\liminf_{k \to \infty} \mathbb{E}\left[\operatorname{dist}(0, \partial \Phi(x_k))^2\right] \le \frac{12 C_L^2 \delta^2}{\rho}
\]
We formalize our target stationary structural neighborhood as:
\[
\mathcal{U}_\delta \triangleq \left\{ x \in \operatorname{dom} \Phi \;\middle\vert{}\; \operatorname{dist}(0, \partial \Phi(x)) \le C_\delta \delta \right\},
\]
where $C_\delta \triangleq \sqrt{\frac{12 C_L^2}{\rho}}$. Because $\Phi$ is proper, lower-semicontinuous, and bounded from below, the restriction of our iterates to the bounded sub-level set $\Omega \triangleq \{x \in \mathbb{R}^d \mid \mathcal{L}_k \le \mathcal{L}_0\}$ ensures that $\Omega$ is a compact domain. By the KL property, there exists a concave, continuously differentiable desingularizing function $\phi(s) = c \cdot s^{1-\theta}$ for an exponent $\theta \in [0, 1)$ such that for all active parameter iterations remaining outside the neighborhood profile ($x_k \in \Omega \setminus \mathcal{U}_\delta$), the local geometry satisfies the functional gradient bounding inequality
\[
\phi'(\Phi(x_k) - \Phi^*) \cdot \operatorname{dist}(0, \partial \Phi(x_k)) \ge 1.
\]
Equivalently, this relation states that the subdifferential magnitude outside our neighborhood is uniformly lower bounded by the inverse slope of the desingularizing metric
\[
\operatorname{dist}(0, \partial \Phi(x_k)) \ge \frac{1}{\phi'(\Phi(x_k) - \Phi^*)}.
\]
Since $\mathcal{L}_k$ is a non-negative quasi-supermartingale (up to the summable noise $\eta_k^2 \delta^2 + C_2 \eta_k^3$), application of the Robbins–Siegmund theorem guarantees that the sequence $\{x_k\}$ remains within the compact sub-level set $\Omega$ almost surely, and the functional sequence $\mathcal{L}_k$ converges to a finite random variable. Recall that the total kinetic energy of the optimizer is bounded, i.e., $\sum_{k=0}^\infty \mathbb{E}[\|x_{k+1} - x_k\|^2] < \infty$, which strictly dictates that $\lim_{k \to \infty} \mathbb{E}[\|x_{k+1} - x_k\|^2] = 0$. Because the distance function $\operatorname{dist}(\cdot, \mathcal{U}_\delta)$ is $1$-Lipschitz continuous, the step-to-step distance variation satisfies
\[
\left| \operatorname{dist}(x_{k+1}, \mathcal{U}_\delta) - \operatorname{dist}(x_k, \mathcal{U}_\delta) \right| \le \|x_{k+1} - x_k\|.
\]
Taking expectations reveals that the variation between consecutive distance profiles vanishes asymptotically ($\lim_{k \to \infty} \mathbb{E}[| \operatorname{dist}(x_{k+1}, \mathcal{U}_\delta) - \operatorname{dist}(x_k, \mathcal{U}_\delta) |] = 0$). Combining this vanishing variation property with our core subsequence tracking condition, $\liminf_{k \to \infty} \mathbb{E}\left[\operatorname{dist}(0, \partial \Phi(x_k))^2\right] \le C_\delta^2 \delta^2$, 
\[
\lim_{k \to \infty} \mathbb{E}\left[\operatorname{dist}(x_k, \mathcal{U}_\delta)\right] = 0.
\]
\end{proof}
\end{document}